\newtheorem{theorem}{Theorem}[section]
\newtheorem{prop}[theorem]{Proposition}
\newtheorem{conj}[theorem]{Conjecture}
\newtheorem{lemma}[theorem]{Lemma}
\newtheorem{cor}[theorem]{Corollary}
\theoremstyle{definition}
\newtheorem{defin}[theorem]{Definition}
\renewcommand{\S}{\mathfrak{S}}
\DeclareMathOperator{\BPD}{BPD}
\newcommand{\wt}{\mathrm{wt}}
\newcommand{\w}{\mathrm{w}}
\renewcommand{\empty}{\mathrm{empty}}
\tikzset{
path/.style = {line width = 2pt, color = blue, dashed}}
\tikzset{
zero_node/.style = {draw, color = orange, line width = 2pt, cross out}}
\title[Principal specializations of Schubert polynomials]{Principal specializations of Schubert polynomials, multi-layered permutations and asymptotics}
\author{Ningxin Zhang}
\address{School of Mathematical Sciences, Peking University}
\email{\href{mailto:xinxin9909@pku.edu.cn}{{\tt xinxin9909@pku.edu.cn}}}
\date{\today}
\begin{document}

\begin{abstract}
Let $v(n)$ be the largest principal specialization of Schubert polynomials for layered permutations
$v(n) := \max_{w \in \mathcal{L}_n} \S_w(1,\ldots,1)$.
Morales, Pak and Panova proved that there is a limit 
\[\lim_{n \to \infty} \frac{\log v(n)}{n^2},\]
and gave a precise description of layered permutations reaching the maximum. In this paper, we extend Morales Pak and Panova's results to generalized principal specialization $\S_w(1,q,q^2,\ldots)$ for multi-layered permutations when $q$ equals a root of unity. 
\end{abstract}
\maketitle

\section{Introduction}\label{sec:intro}
The study of principal specializations $\Upsilon_w := \S_w(1,\ldots,1)$ of Schubert polynomials has attracted a lot of interests recently \cite{dennin2022pattern, gao2021principal, meszaros2021inclusion}. Geometrically, $\Upsilon_w$ equals the degree of the matrix Schubert variety of $w$ and combinatorially, it equals the number of pipe dreams or bumpless pipe dreams of $w$. In general, principal specializations $\S_w(1,q,q^2,\ldots)$ can be calculated by Macdonald's identity \cite[(6.11$_q$)]{SchubertNotes}
\[\S_w(1,q,q^2,\ldots) = \sum_{a = (a_1,\ldots,a_\ell) \in Re(w)} q^{\phi(a)} \frac{(1-q^{a_1})\cdots(1-q^{a_\ell})}{(1-q)\cdots(1-q^\ell)},\]
where $\ell = \ell(w)$ is the length of $w$, $Re(w)$ is the set of reduced words of $w \in S_n$ and $\phi(a)$ is defined as $\phi(a) := \sum_{a_i < a_{i+1}} i$.
It was first proved by Fomin and Stanley \cite{Fomin} in an algebraic way. Recently, Billey, Holroyd and Young \cite{Billey} found a bijective proof of this identity. 

Let $u(n)$ be the largest principal specialization
$u(n) := \max_{w \in S_n} \Upsilon_w$.
\begin{conj}[\cite{stanley}]\label{conj:Stanley}
    There is a limit \ $\lim_{n \to \infty} \frac{\log u(n)}{n^2}$.
\end{conj}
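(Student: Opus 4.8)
The plan is to prove the limit exists by a sandwiching argument for $a_n := \log u(n)/n^2$, deferring the evaluation of the constant itself. I would first record that $(a_n)$ is bounded. Every reduced pipe dream of $w \in S_n$ is a subset of the staircase $\{(i,j) : i+j \le n\}$, which has $\binom{n}{2}$ cells, so $\Upsilon_w \le 2^{\binom{n}{2}}$ and hence $a_n \le \frac{n-1}{2n}\log 2$. For a matching positive lower bound I would use $u(n) \ge v(n)$ together with the Morales--Pak--Panova theorem, giving $\liminf_n a_n \ge \lim_n \log v(n)/n^2 > 0$. Writing $c^\ast := \limsup_n a_n \in (0,\tfrac12\log 2]$, it then suffices to prove $\liminf_n a_n \ge c^\ast$.

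The first structural input is the behaviour of $\Upsilon$ under direct sums: concatenating a reduced pipe dream of $u \in S_m$ with a shifted reduced pipe dream of $v \in S_n$ yields a reduced pipe dream of $u \oplus v$, so $\Upsilon_{u \oplus v} \ge \Upsilon_u \Upsilon_v$, with the inequality typically strict (for instance $\Upsilon_{2143} = 3 > 1 = \Upsilon_{21}\Upsilon_{21}$). Thus $f := \log u$ is superadditive and Fekete's lemma produces $\lim_n f(n)/n$; but since $f(n) = \Theta(n^2)$ this limit is $+\infty$, so the one-dimensional lemma is blind to the scale at issue. The quadratic normalization forces a genuinely two-dimensional, self-similar argument, which is the heart of the plan. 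Fixing any $w \in S_n$, I would build a family of refinements $W_k \in S_{kn}$ --- replacing each point of $w$ by a small $k$-point block, chosen so that the permutation matrices of all the $W_k$ converge weakly to the same permuton as $w$ --- and prove a transfer estimate $\tfrac{1}{(kn)^2}\log \Upsilon_{W_k} \to \ell_w \ge \log\Upsilon_w/n^2$ as $k \to \infty$. Granting this, applied to an $S_n$-maximizer $w$ it gives $\liminf_N a_N \ge \ell_w \ge a_n$ (with sizes that are not multiples of $n$ handled by a filler of size $<n$, which perturbs $\log\Upsilon$ by only $o(N^2)$); taking the supremum over $n$ then yields $\liminf_N a_N \ge \sup_n a_n \ge c^\ast = \limsup_N a_N$, so the limit exists and \Cref{conj:Stanley} follows.

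The main obstacle is precisely this transfer estimate. Unlike for a direct sum there is no product formula for $\Upsilon_{W_k}$: counting its reduced pipe dreams requires understanding which fillings of the off-diagonal regions between blocks extend a given family of block pipe dreams to a \emph{reduced} pipe dream of $W_k$, and the reducedness condition couples each block to its neighbours in a non-local way across block boundaries. Showing that this coupling does not degrade the per-cell exponential rate --- that is, $\ell_w \ge \log\Upsilon_w/n^2$ rather than strictly less --- is a lower-semicontinuity/self-similarity statement, and it is exactly the step that the explicit product and binomial factorizations available for (multi-)layered permutations circumvent, but that has no known analogue for arbitrary $w \in S_n$.

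Conceptually the whole scheme is the lower half of a permuton variational principle: one expects a functional $\Phi$ on the space of permutons with $\tfrac{1}{n^2}\log\Upsilon_{w_n} \to \Phi(\mu)$ whenever the permutation matrices of $w_n$ converge weakly to a permuton $\mu$, whence $a_n \to \max_\mu \Phi(\mu)$ by compactness of the permuton space. Establishing that such a $\Phi$ exists and is upper semicontinuous would settle the conjecture outright, and the refinement estimate of the previous paragraph is the concrete combinatorial core of its lower (liminf) half. I expect the genuine difficulty to live entirely in controlling reduced pipe dreams across block boundaries, and I would attack it by first analyzing the refinement $W_k$ for $w$ ranging over increasingly rich families (dominant, then vexillary, then general) to isolate the boundary contribution and show it is negligible at the $n^2$ scale.
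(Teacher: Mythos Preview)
The statement you are attempting to prove is presented in the paper as an open \emph{conjecture} (Stanley~2017), not a theorem; the paper does not supply a proof, and indeed Morales--Pak--Panova's result quoted immediately afterward resolves only the layered case $v(n)$, leaving \Cref{conj:Stanley} open. So there is no ``paper's own proof'' to compare your argument against.

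Your proposal is an honest research outline, but it is not a proof, and you correctly identify where the gap lies. The upper bound $\Upsilon_w \le 2^{\binom{n}{2}}$, the lower bound via $v(n)$, and the observation that plain superadditivity of $\log u$ under direct sums gives only $\lim f(n)/n = +\infty$ are all fine. The entire weight of the argument then rests on the transfer estimate
\[
\frac{1}{(kn)^2}\log \Upsilon_{W_k} \;\longrightarrow\; \ell_w \;\ge\; \frac{\log\Upsilon_w}{n^2}
\]
for your block refinements $W_k$. You do not prove this; you describe it as ``the main obstacle'' and note that the reducedness constraint couples blocks non-locally. That is exactly the point: without this inequality your sandwich never closes, and establishing it is tantamount to proving the lower-semicontinuity half of a permuton variational principle for $\Upsilon$, which is not known and would itself be a substantial advance. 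In particular, nothing in your outline rules out the possibility that the boundary coupling costs a positive fraction at the $n^2$ scale for non-layered maximizers, in which case $\ell_w < \log\Upsilon_w/n^2$ and the argument collapses. Until that estimate (or an equivalent self-similarity statement) is established, the proposal remains a plausible program rather than a proof of \Cref{conj:Stanley}.
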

Stanley \cite{stanley} made the conjecture above in 2017. Moreover, he asked about a description of such permutations $w \in S_n$ that achieve the maximum.

Layered permutations $w(b_1,\ldots,b_k)$ are defined as 
\[w(b_1,\ldots,b_k) := (b_1,\ldots,1,b_1+b_2,\ldots,b_1+1,\ldots,n,\ldots n-b_k+1),
\] where $n = b_1+\cdots+b_k$. Denote by $\mathcal{L}_n$ the set of layered permutations in $S_n$. 
For example, $w(1,n-1) = (1,n,n-1,\ldots,2)$. It is well known that 
$\Upsilon_{(1,n,n-1,\ldots,2)} = C_{n-1}$, where $C_k=\frac{(2k)!}{k!(k+1)!}$ is the $k$-th Catalan number. 

Let $v(n)$ be the largest principal specialization for layered permutations 
\[v(n):= \max_{w \in \mathcal{L}_n} \Upsilon_w.\]
Morales, Pak and Panova \cite{MPP} established the asymptotic behavior of principal specializations for layered permutations and resolved Stanley's conjecture partially.
\begin{theorem}[\cite{MPP}]
    The limit \[\lim_{n \to \infty} \frac{\log_2 v(n)}{n^2} 
 = \frac{\gamma}{\log 2} \approx 0.2932\] exists, where $\gamma \approx 0.2032558981$ is a universal constant. Moreover, the maximum $v(n)$ is achieved at such layered permutations
 \[w(\ldots,b_2,b_1), \text{ where } b_i \sim \alpha^{i-1} (1-\alpha)n \text{ as } n \to \infty,\]
 for every $i$, where $\alpha \approx 0.4331818312$ is a universal constant. 
\end{theorem}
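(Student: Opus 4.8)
The plan is to turn the enumeration into an entropy-type variational problem and solve it by exploiting the self-similar structure of layered permutations.

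\emph{Step 1: an exact factorization and an asymptotic functional.} A layered permutation is a direct sum of longest elements, so its reduced words are shuffles of reduced words of the blocks; since the product $a_1\cdots a_{\ell(w)}$ is unchanged under shuffling, Macdonald's identity at $q\to 1$ (which reads $\Upsilon_w=\tfrac1{\ell(w)!}\sum_{a\in Re(w)}a_1\cdots a_{\ell(w)}$) gives, in the notation $w(\dots,b_2,b_1)$ of the statement, the exact factorization
\[
\Upsilon_{w(\dots,b_2,b_1)}\;=\;\prod_{i\ge1}\Upsilon_{\,1^{\,b_{i+1}+b_{i+2}+\cdots}\,\oplus\,w_0^{(b_i)}},
\]
where $w_0^{(m)}\in S_m$ is the longest element and $1^{c}\oplus u$ means $u$ preceded by $c$ fixed points. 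Each factor is the principal specialization of a permutation of the form $1^{c}\oplus w_0^{(m)}$; such permutations avoid $2143$, hence are vexillary, so the factor equals a flagged Schur polynomial evaluated at $1$ --- equivalently, the number of semistandard tableaux of staircase shape $(m-1,m-2,\dots,1)$ whose $i$-th row is bounded by $c+i$ --- for which there is a product formula. First I would feed that product formula into Stirling's approximation; because $\Upsilon_w\le 2^{\binom{n}{2}}$ the $n^2\log n$ terms must cancel, leaving
\[
\log \Upsilon_{w(\dots,b_2,b_1)}\;=\;n^2\,\Phi(\beta_1,\beta_2,\dots)+o(n^2),\qquad \beta_i:=b_i/n,
\]
uniformly in the block sizes, where $\Phi$ is an explicit continuous functional on the simplex $\{(\beta_i):\beta_i\ge0,\ \sum_i\beta_i=1\}$. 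Peeling off the last block (of size $b_1$, inside $S_n$) extracts exactly the factor $\Upsilon_{1^{\,n-b_1}\oplus w_0^{(b_1)}}$ and leaves a layered permutation in $S_{n-b_1}$, so $\Phi$ obeys the self-similarity
\[
\Phi(\beta_1,\beta_2,\beta_3,\dots)\;=\;\Psi(\beta_1)+(1-\beta_1)^2\,\Phi\!\Bigl(\tfrac{\beta_2}{1-\beta_1},\tfrac{\beta_3}{1-\beta_1},\dots\Bigr),
\]
where $\Psi(t):=\lim_{n\to\infty}\tfrac1{n^2}\log\Upsilon_{1^{\lfloor(1-t)n\rfloor}\oplus w_0^{(\lceil tn\rceil)}}$ is an explicit one-variable function and the factor $(1-\beta_1)^2$ records that the remaining permutation lives on the scale $(n-b_1)^2$.

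\emph{Step 2: the variational problem.} Put $\gamma:=\sup_\beta\Phi(\beta)$, attained by compactness. The self-similarity forces
\[
\gamma\;=\;\max_{0\le t\le1}\bigl(\Psi(t)+(1-t)^2\gamma\bigr),
\]
and if $t=\alpha$ realizes this maximum then the rescaled tail of a maximizing profile is again maximizing; a profile agreeing, after rescaling, with its own tail is geometric, so the optimal profile is $\beta_i=\alpha^{i-1}(1-\alpha)$. The constants $\alpha,\gamma$ are then pinned down by the first-order condition and the fixed-point equation,
\[
\Psi'(\alpha)=2(1-\alpha)\gamma,\qquad \alpha(2-\alpha)\,\gamma=\Psi(\alpha),
\]
whose numerical solution is $\alpha\approx 0.4331818312$, $\gamma\approx 0.2032558981$. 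Uniqueness of the maximizing profile follows once one checks that $t\mapsto\Psi(t)+(1-t)^2\gamma$ is strictly unimodal on $[0,1]$ (a second-order condition on $\Psi$), applied at every level of the recursion.

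\emph{Step 3: matching bounds and the description of maximizers.} For the lower bound, for each $n$ take the layered permutation with blocks $b_i=\lfloor\alpha^{i-1}(1-\alpha)n\rfloor$, discarding the $O(\log n)$ blocks that have become empty; Step 1 gives $v(n)\ge\Upsilon_{w}\ge\exp(\gamma n^2-o(n^2))$. For the upper bound, Step 1 with $\Phi\le\gamma$ gives $\log v(n)\le\gamma n^2+o(n^2)$. Hence $\log_2 v(n)/n^2\to\gamma/\log 2\approx 0.2932$. Finally, if $w^{(n)}\in\mathcal{L}_n$ attains $v(n)$, then $\Phi$ of its rescaled block profile tends to $\gamma$; since these profiles lie in a compact set and $\gamma$ is a strict maximum, they converge to $(\alpha^{i-1}(1-\alpha))_i$, i.e.\ $b_i\sim\alpha^{i-1}(1-\alpha)n$.

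\emph{Main obstacle.} The heart of the matter is Step 1: identifying $\Psi$ explicitly from the flagged-tableau product formula, and running the Stirling analysis carefully enough to see both the cancellation of the $n^2\log n$ terms and the precise self-similar form of $\Phi$ --- in particular, that a block's contribution depends to leading exponential order only on that block's size and the ambient size. Once $\Psi$ is in hand, the one-variable extremal problem, the unimodality check, and the bootstrap in Step 3 are comparatively routine.
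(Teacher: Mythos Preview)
This theorem is quoted in the paper as a result of Morales--Pak--Panova \cite{MPP} and is \emph{not} proved here; it appears in the introduction as background for the paper's own Theorems~\ref{thm:Catalan for double layer} and~\ref{thm:asymptotic}. So there is no proof in this paper to compare your proposal against.

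That said, your outline is essentially the strategy of \cite{MPP}, and it lines up with how the present paper \emph{uses} the result. The paper recalls (Section~\ref{sec:main part}) the factorization
\[
\Upsilon_{w(b_1,\ldots,b_k)} \;=\; \prod_{i\ge 2} F(b_1+\cdots+b_{i-1},\,b_i),
\qquad F(m,p):=\Upsilon_{1^m\times w_0(p)},
\]
obtained directly from the multiplicativity $\mathfrak{S}_{u\times v}=\mathfrak{S}_u\cdot\mathfrak{S}_{1^m\times v}$ (equation~\eqref{equ:prop}); this is exactly your Step~1 factorization, though you reach it via a shuffle argument on reduced words inside Macdonald's identity rather than via \eqref{equ:prop}. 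Your single-block function $\Psi(t)$ is precisely the leading exponential asymptotics of $F((1-t)n,\,tn)$, and the fixed-point equation $\gamma=\max_t\bigl(\Psi(t)+(1-t)^2\gamma\bigr)$ together with the geometric optimal profile $\beta_i=\alpha^{i-1}(1-\alpha)$ is the same variational problem solved in \cite{MPP}, with the same numerical constants $\alpha\approx 0.4332$, $\gamma\approx 0.2033$. One minor remark: invoking vexillarity and flagged Schur functions to get the product formula for $F(m,p)$ is correct but a slight detour; \cite{MPP} (and implicitly this paper) work with $F(m,p)$ directly via an explicit product/superfactorial expression.
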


Merzon and Smirnov \cite{Merzon} made the following conjecture, which lead to \Cref{conj:Stanley}. 
\begin{conj}[\cite{Merzon}]\label{conj:Merzon}
    For every $n$, all permutations reaching the maximum $u(n)$ are layered permutations. In other words, $u(n) = v(n)$. 
\end{conj}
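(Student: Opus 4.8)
The plan would be to prove the stronger \emph{monotone straightening} statement: every $w\in S_n$ admits a finite chain $w=w^{(0)},w^{(1)},\dots,w^{(N)}$ in $S_n$ with $\ell(w^{(i)})$ nondecreasing, $\Upsilon_{w^{(i)}}\le\Upsilon_{w^{(i+1)}}$ for all $i$, and $w^{(N)}$ layered. Since the theorem of Morales, Pak and Panova quoted above identifies $v(n)$ and its layered maximizers, such a chain immediately forces $\Upsilon_w\le v(n)$, hence $u(n)=v(n)$; and if each step can be taken \emph{strictly} increasing whenever $w^{(i)}$ is not layered, one also obtains that every maximizer is layered, which is the full conjecture of Merzon and Smirnov.

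To make the inequalities tractable I would pass to the (bumpless) pipe dream model, using $\Upsilon_w=\#\mathrm{PD}(w)=\#\BPD(w)$, so that each step becomes an explicit injection of finite sets. The straightening move is dictated by the classical fact that the layered permutations are exactly $\mathrm{Av}(231,312)$, the direct sums of decreasing runs: if $w$ is not layered it contains an occurrence of $231$ or $312$, and one wants a \emph{local uncrossing} at that occurrence that does not destroy pipe dreams --- morally a ``rearranging'' analogue of Gao's pattern-containment monotonicity $\Upsilon_v\le\Upsilon_w$ for $v$ a pattern of $w$, but now applied to $w$ itself rather than to a smaller permutation. The key structural input is the shuffle description of reduced words: for a direct sum $u\oplus v$, $\mathrm{Re}(u\oplus v)$ is the set of shuffles of $\mathrm{Re}(u)$ with the upward shift of $\mathrm{Re}(v)$, which through Macdonald's identity collapses to the clean identity $\Upsilon_{u\oplus v}=\Upsilon_u\cdot\Upsilon_{1^{|u|}\oplus v}$. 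Since $\Upsilon_{1^p\oplus v}$ is increasing in $p$, this already shows that among a fixed multiset of block sizes it is favorable to place the larger decreasing runs at the high indices, exactly matching the Morales--Pak--Panova optimum $w(\dots,b_2,b_1)$ with $b_1>b_2>\cdots$; one would then induct on $n$, peeling off the first block and invoking the recursion for $\Upsilon$ of layered permutations from their paper to finish.

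The heart of the matter --- and the step I expect to be the real obstacle --- is the local uncrossing lemma: there is no obviously correct single move on a non-layered $w$ that provably preserves or increases the number of (bumpless) pipe dreams, because pipe-dream counts are notoriously rigid under rearrangements that keep $\ell(w)$ fixed, and the $231$ or $312$ occurrence interacts globally with the rest of the diagram. This is precisely where a proof can break: if no monotone straightening move exists then the conjecture is false, and its resolution would instead take the form of an explicit non-layered maximizer for some small $n$. So the concrete program is: (a) attempt to build the injection $\BPD(w)\hookrightarrow\BPD(w')$ for a well-chosen straightening step $w\rightsquigarrow w'$, controlling the droplet/drift behavior around the pattern occurrence, testing first on small $n$ and on the MPP-optimal family; and, failing that, (b) settle for the weaker asymptotic form $\log u(n)=\log v(n)+o(n^2)$ (which would in particular resolve \Cref{conj:Stanley}) by bounding $\log\#\mathrm{PD}(w)$ above by a quantity depending only on $\ell(w)$ and $n$ --- an entropy or $f^\lambda$-type shape bound --- that is maximized, to leading order, only on diagrams of layered permutations; and, if even that fails, to search computationally for the counterexample.
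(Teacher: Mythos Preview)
The statement you are trying to prove is \emph{Conjecture}~\ref{conj:Merzon} of Merzon and Smirnov; the paper does not prove it and does not claim to. It is quoted only as motivation and context for the paper's actual results, which concern the analogous (and also conjectural) question at roots of unity. There is therefore no ``paper's own proof'' to compare your proposal against.

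Your write-up is a research plan, not a proof, and you say as much: the crux is the ``local uncrossing lemma'' that would give an injection $\BPD(w)\hookrightarrow\BPD(w')$ for some straightening step $w\rightsquigarrow w'$ toward a layered permutation, and you explicitly flag that no such move is known and that this is where the argument can break. That is exactly the status of the problem --- Conjecture~\ref{conj:Merzon} is open. The surrounding observations (the factorization $\Upsilon_{u\times v}=\Upsilon_u\cdot\Upsilon_{1^{|u|}\times v}$, the characterization of layered permutations as $\mathrm{Av}(231,312)$, the monotonicity of $\Upsilon_{1^p\times v}$ in $p$) are correct and standard, but they only handle comparisons \emph{within} the layered class and do not produce the needed injection from a non-layered $w$. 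So the proposal has a genuine gap at precisely the point you identify, and as written it does not constitute a proof of the conjecture.
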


In this paper, we extend the above results to generalized principal specializations of Schubert polynomials $\S_w (1,q,q^2,...)$, where $q$ is a root of unity. To simplify the writing, we focus on the principal specializations 
\[\Phi_w :=|\S_w(1,q,q^2,\ldots)|_{q=-1} = \left|\S_w(1,-1,\ldots)\right|.\] 

\begin{defin}\label{def:doubly}
We define \emph{doubly layered permutations} $w_2(2b_1,\ldots,2b_{k-1},2b_k+r)$ as
\[\begin{aligned}  w_2(2b_1,\ldots,2b_{k-1},2b_k+r)  := &(2b_1{-}1,2b_1,\ldots,1,2,2b_1{+}2b_2{-}1, 2b_1+2b_2, \ldots,2b_1+1,2b_1+2, \\ & \ldots, n-1,n, n-3,n-2,\ldots) \in S_n
\end{aligned}\]
where $n = 2b_1 + \ldots + 2b_k +r$ and $r = 0$ or $1$ is the remainder of $n\bmod2$.
\end{defin}
See \Cref{fig:double layer} for examples of such permutation matrices. Denote by $\mathcal{DL}_n$ the set of doubly layered permutations in $S_n$. 
\begin{figure}[h]
    \centering
    \begin{subfigure}{.45\textwidth}
    \centering
        \begin{tikzpicture}
        \draw (0.1,0.1) rectangle (3.3,3.3);
        \filldraw[fill = blue!20] (0.1,3.3) rectangle (0.5,2.9) (0.5,2.9) rectangle (1.7,1.7) (1.7,1.7) rectangle (3.3,0.1);
        \foreach \i in {0.2,0.4}\filldraw (\i,3.4-\i) circle (.05);
        \foreach \i in {0.6,1,1.4} \filldraw (\i,\i+1.4) circle (.05) (\i+0.2,\i+1.2) circle (.05);
        \foreach \i in {1.8,2.2,2.6,3.0} \filldraw (\i,\i-1.4) circle (.05) (\i+0.2,\i-1.6) circle (.05);
        \end{tikzpicture}
        \caption{when $n$ is even}
        \label{subfig:layer even}
    \end{subfigure}
    \begin{subfigure}{.45\textwidth}
    \centering
        \begin{tikzpicture}
        \draw (-0.1,-0.1) rectangle (3.3,3.3);
        \filldraw[fill = blue!20] (-0.1,3.3) rectangle (0.3,2.9) (0.3,2.9) rectangle (1.5,1.7) (1.5,1.7) rectangle (3.3,-0.1);
        \foreach \i in {0,0.2}\filldraw (\i,3.2-\i) circle (.05);
        \foreach \i in {0.4,0.8,1.2} \filldraw (\i,\i+1.6) circle (.05) (\i+0.2,\i+1.4) circle (.05);
        \foreach \i in {1.8,2.2,2.6,3} \filldraw (\i,\i-1.4) circle (.05) (\i+0.2,\i-1.6) circle (.05);
        \filldraw (1.6,0) circle (.05);
        \end{tikzpicture}
        \caption{when $n$ is odd}
        \label{subfig:layer odd}
    \end{subfigure}
    \caption{Doubly layered permutations}
    \label{fig:double layer}
\end{figure}
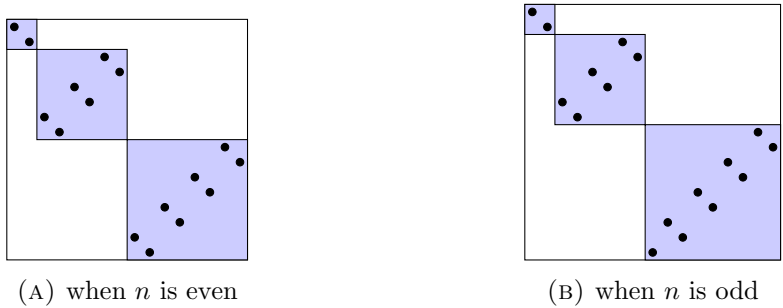

Some doubly layered permutations achieve nice values under the principal specialization at $q=-1$, analogous to the behavior of certain layered permutations at $q=1$.
\begin{theorem}\label{thm:Catalan for double layer}
For $w = w_2(2,n-2) = (1,2,n-1,n,n-3,n-2,\ldots)$,
\[\Phi_{w}=\begin{cases}
C_{k-1}^2 & \text{ if }n=2k\text{ is even},\\
C_{k-1}C_k & \text{ if }n=2k+1\text{ is odd},
\end{cases}\]
where $C_k$ is the $k$-th Catalan number.
\end{theorem}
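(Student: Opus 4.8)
The plan is to exploit that $w := w_2(2,n-2)$ is \emph{vexillary}, which turns $\S_w(1,q,q^2,\ldots)$ into the principal specialization of a flagged Schur polynomial, and then to show that at $q=-1$ the corresponding Jacobi--Trudi determinant block-triangularizes along a parity splitting, with the two diagonal blocks being exactly the staircase determinants counted by the Catalan numbers recalled in \Cref{sec:intro}. First I would check directly from \Cref{def:doubly} that $w$ is $2143$-avoiding: an inversion of $w$ must start at a position inside one of the pair-blocks lying after the initial block $\{1,2\}$, and since those pair-blocks carry strictly decreasing sets of values as the position grows, one can never complete it to the second, ``larger'' descent of a $2143$-pattern. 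Hence $w$ is vexillary, so $\S_w = s_\lambda^{\phi}$ is a flagged Schur polynomial, where $\lambda$ is the sorted Lehmer code $c(w)$ and $\phi$ the associated flag. Computing $c(w)$ explicitly: for $n=2k$ one gets $c(w)=(0,0,2k{-}4,2k{-}4,2k{-}6,2k{-}6,\ldots,2,2,0,0)$, so $\lambda=(2k{-}4,2k{-}4,\ldots,2,2)$ with the \emph{consecutive} flag $\phi_i = i+2$; for $n=2k+1$ one gets $\lambda=(2k{-}3,2k{-}3,\ldots,1,1)$ with the same flag $\phi_i=i+2$.

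Next, using the flagged Jacobi--Trudi formula together with $h_d(1,q,\ldots,q^{m-1}) = \binom{m+d-1}{d}_q$, one writes
\[
\S_w(1,q,q^2,\ldots) = \det\!\left(\binom{\phi_i+\lambda_i-i+j-1}{\lambda_i-i+j}_{\!q}\right)_{\!1\le i,j\le \ell(\lambda)} .
\]
Setting $q=-1$ and using the evaluation $\binom{a}{b}_{-1} = \binom{\lfloor a/2\rfloor}{\lfloor b/2\rfloor}$, which equals $0$ exactly when $a$ is even and $b$ is odd, produces an integer matrix $M$. A short parity computation, using that all parts of $\lambda$ have the same parity, shows that the $(i,j)$ entry of $M$ vanishes precisely when $i$ is even and $j$ has a fixed parity depending on $n$. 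Reordering the rows to list the odd-indexed ones first, and the columns by parity in the matching order, makes $M$ block upper triangular, so $\Phi_w = |\det M_1|\cdot|\det M_2|$ for its two diagonal blocks.

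Finally, computing the surviving entries shows that each diagonal block has $(a,b)$ entry of the form $\binom{m-a+b}{a}$ for an appropriate $m$ (with $m=k-1$ for both blocks when $n=2k$, and $m=k$ and $m=k-1$ for the two blocks when $n=2k+1$); in the odd case the $m=k-1$ block is one row too large, but its last row is $(0,\ldots,0,1)$, so it reduces to the $(k-2)\times(k-2)$ case. Now $\det\big(\binom{m-i+j}{i}\big)_{1\le i,j\le m-1}$ is exactly the Jacobi--Trudi determinant computing $\S_{w(1,m)}(1,\ldots,1)=\Upsilon_{(1,m+1,m,\ldots,2)}=C_m$, the Catalan identity recalled in the introduction. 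Multiplying the two blocks yields $\Phi_w = C_{k-1}^2$ for $n=2k$ and $\Phi_w = C_{k-1}C_k$ for $n=2k+1$. The main obstacle is the middle step: getting the parity bookkeeping in the $q=-1$ evaluation of the Gaussian binomials exactly right so that the off-diagonal block genuinely vanishes (and pinning down the flag $\phi$ precisely in the first step); once $M$ is block triangular, recognizing its diagonal blocks as the staircase--Catalan determinants is immediate.
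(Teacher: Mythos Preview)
Your approach is correct and genuinely different from the paper's. The paper never uses vexillarity or flagged Jacobi--Trudi; instead it reads $\Phi_w$ off the BPD expansion, converts the two free pipes into a non-intersecting $2$-path system in a signed staircase graph, applies Lindstr\"om--Gessel--Viennot to obtain a \emph{$2\times 2$} matrix of signed lattice-path counts, and then evaluates those four entries directly (via a ``zero points'' cancellation argument) as $0$, $\pm C_{k-1}$, or $\pm C_k$. So in the paper the Catalan numbers appear as the \emph{entries} of a small matrix, whereas in your argument they appear as the \emph{determinants} of the two diagonal blocks of a $(2k{-}4)\times(2k{-}4)$ (resp.\ $(2k{-}2)\times(2k{-}2)$) Jacobi--Trudi matrix after the $q$-Lucas parity collapse.

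What each approach buys: your route is pleasantly ``off the shelf'' --- once one checks that $w_2(2,n-2)$ is $2143$-avoiding and pins down the flag $\phi_i=i+2$ (equivalently $(4,4,6,6,\ldots)$; both give the same flagged Schur function for this shape), the rest is the standard $\binom{a}{b}_{-1}$ evaluation plus a known staircase determinant. The paper's BPD/LGV route, on the other hand, is what carries the weight of the rest of the paper: the same $2m\times 2m$ signed-path matrix is used to prove \Cref{thm:F} (the factorization $\tilde F(2m,2p+r)=F(m,p)F(m,p{+}r)$) and its $k$-th-root-of-unity analogue in \Cref{sec:multi}, which in turn yield \Cref{thm:asymptotic}. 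Your method could in principle be pushed to $1^{2m}\times\tilde w_0(2p+r)$ (still vexillary), but the paper's setup gets there with no extra work. A small cosmetic point: your reordering actually produces a block \emph{lower}-triangular matrix (zeros sit at $i$ even, $j$ odd for $n$ even, and at $i,j$ both even for $n$ odd), not upper-triangular, but of course this does not affect the determinant.
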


Let $\Tilde{u}(n)$, $\Tilde{v}(n)$ be the largest principal specialization at $q = -1$ \[\begin{aligned}\Tilde{u}(n) &:= \max_{w \in S_n} \Phi_w, \\ \Tilde{v}(n) &:= \max_{w \in \mathcal{DL}_n} \Phi_w. \end{aligned} \]
Analogous to Morales, Pak and Panova's work, we have the following asymptotic result. 

\begin{theorem}\label{thm:asymptotic}
    There is a limit \[\lim_{n \to \infty} \frac{\log_2 \Tilde{v}(n)}{n^2} = \frac{1}{2}\lim_{n \to \infty} \frac{\log_2 v(n)}{n^2}  = \frac{\gamma}{2\log2}\approx 0.1466.\]
\end{theorem}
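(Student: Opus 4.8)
The plan is to reduce \Cref{thm:asymptotic} to the asymptotics of $v(n)$ established in \cite{MPP}, via a ``folding'' estimate that expresses $\Phi$ of a doubly layered permutation through ordinary principal specializations of its two halved layered permutations. Concretely, I would first prove the estimate, which generalizes \Cref{thm:Catalan for double layer}: for all $b_1,\dots,b_k\ge 1$ and $r\in\{0,1\}$,
\begin{equation}\label{eq:fold-bound}
\Upsilon_{w(b_1,\dots,b_k)}^2 \;\le\; \Phi_{w_2(2b_1,\dots,2b_{k-1},2b_k+r)} \;\le\; \Upsilon_{w(b_1,\dots,b_k)}\cdot\Upsilon_{w(b_1,\dots,b_{k-1},b_k+r)} .
\end{equation}
I in fact expect the exact identity $\Phi_{w_2(2b_1,\dots,2b_{k-1},2b_k+r)}=\Upsilon_{w(b_1,\dots,b_k)}\cdot\Upsilon_{w(b_1,\dots,b_{k-1},b_k+r)}$ to hold; for $r=0$ this reads $\Phi_{w_2(2b_1,\dots,2b_k)}=\Upsilon_{w(b_1,\dots,b_k)}^2$, and taking $k=2$, $b_1=1$ recovers \Cref{thm:Catalan for double layer} since $\Upsilon_{w(1,j)}=C_j$. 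Only the two-sided bound \eqref{eq:fold-bound} is needed for the theorem, though, so I would settle for the bounds if the exact identity turns out to be delicate in the odd case.

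To prove \eqref{eq:fold-bound} I would argue with (bumpless) pipe dreams. Via the pipe-dream model behind Macdonald's identity \cite{SchubertNotes,Billey}, one has $\mathfrak S_w(1,q,q^2,\dots)=\sum_{D}q^{c(D)}$ for a content statistic $c$, hence $\Phi_w=\bigl|\sum_D(-1)^{c(D)}\bigr|$, so $\Phi_{w_2}$ is a signed count of pipe dreams of $w_2$. The reduced pipe dreams of a doubly layered permutation decompose block-by-block along its double-layers, and each double-layer block of even width $2b_i$ carries a reflection interchanging its two width-$b_i$ halves; I would show that, after passing to $q=-1$, this reflection is a sign-reversing involution on all but a ``balanced diagonal'' family of pipe dreams, and that this surviving family is in bijection with pairs $(D_1,D_2)$ of pipe dreams of $w(b_1,\dots,b_k)$ and of $w(b_1,\dots,b_{k-1},b_k+r)$ — the extra column present when $r=1$ being exactly what forces the shifted second factor. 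This bijection yields the exact identity; for the two-sided bound \eqref{eq:fold-bound} one needs only the easier half-statements, namely a sign-coherent injection of such pairs into pipe dreams of $w_2$ (giving the lower bound) and the cancellation estimate $\Phi_{w_2}\le$ the folded count (giving the upper bound). Making the reflection precise and checking it is compatible across the interfaces between consecutive double-layers is the technical heart and the step I expect to be the main obstacle; alternatively one could manipulate Macdonald's sum directly, partitioning the reduced words of $w_2$ by the residues of their letters modulo $2$ and tracking the order of vanishing of $\prod_i(1-q^{a_i})\big/\prod_i(1-q^i)$ at $q=-1$, though that looks messier.

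Granting \eqref{eq:fold-bound}, the limit follows immediately. Doubly layered permutations of $S_n$ are in bijection with compositions $(b_1,\dots,b_k)$ of $m:=\lfloor n/2\rfloor$ (with $r=n-2m$). Taking the maximum over all of them in \eqref{eq:fold-bound}: the lower bound, applied to a composition attaining $v(m)$, gives $\Tilde v(n)\ge v(m)^2$, and the upper bound gives $\Tilde v(n)\le v(m)\,v(m+r)$, since $\Upsilon_{w(b_1,\dots,b_k)}\le v(m)$ and $\Upsilon_{w(b_1,\dots,b_{k-1},b_k+r)}\le v(m+r)$. Hence
\[
v(\lfloor n/2\rfloor)^2 \;\le\; \Tilde v(n) \;\le\; v(\lfloor n/2\rfloor)\,v(\lceil n/2\rceil)
\]
for every $n$. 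Dividing by $n^2$ and letting $n\to\infty$, using $\lfloor n/2\rfloor,\lceil n/2\rceil\sim n/2$ together with $\log_2 v(m)/m^2\to\gamma/\log 2$ from \cite{MPP}, both the left and right sides converge to $\tfrac12\cdot\tfrac{\gamma}{\log 2}=\tfrac{\gamma}{2\log 2}$, and a squeeze gives $\lim_{n\to\infty}\log_2\Tilde v(n)/n^2=\gamma/(2\log 2)$, which is the claim. Finally, tracing through the bijection shows that $\Tilde v(n)$ is attained precisely at the ``doublings'' of the layered permutations that maximize $v(\lfloor n/2\rfloor)$; by \cite{MPP} these are $w_2(2b_1,\dots)$ with block sizes $2b_i\sim 2\alpha^{i-1}(1-\alpha)\lfloor n/2\rfloor\sim\alpha^{i-1}(1-\alpha)\,n$.
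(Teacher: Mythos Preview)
Your overall plan coincides with the paper's: the reduction of \Cref{thm:asymptotic} to a ``folding'' identity, and the squeeze deduction of the limit from it, are exactly what the paper does. The identity you expect,
\[
\Phi_{w_2(2b_1,\dots,2b_{k-1},2b_k+r)}=\Upsilon_{w(b_1,\dots,b_k)}\cdot\Upsilon_{w(b_1,\dots,b_{k-1},b_k+r)},
\]
is precisely \Cref{cor:doubly}, and the paper proves it exactly (not merely the two-sided bound you propose to settle for).

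Where you diverge is in \emph{how} this identity is established. The paper does not use a sign-reversing involution. It first factors both sides block by block via \eqref{equ:prop}, reducing to the single-block statement $\Tilde F(2m,2p+r)=F(m,p)\,F(m,p+r)$; it then interprets each $\Tilde F$ and $F$, through bumpless pipe dreams, as a signed count of non-intersecting lattice paths in a weighted staircase graph, applies the Lindstr\"om--Gessel--Viennot lemma to write them as determinants $\det\Tilde M_{2m\times 2m}$ and $\det M_{m\times m}$, and shows---via a ``zero points'' analysis in the weighted graph---that every entry of $\Tilde M$ is $0$ or $\pm M_{ij}$, so that elementary row/column operations block-triangularize $\Tilde M$ into $\left(\begin{smallmatrix}M&0\\0&-M\end{smallmatrix}\right)$ (or $\left(\begin{smallmatrix}M'&*\\0&M\end{smallmatrix}\right)$ when $r=1$). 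This determinantal route sidesteps the layer-interface issue you flag as the main obstacle: the factorization \eqref{equ:prop} decouples the double layers \emph{before} any cancellation argument is needed, so one only ever works inside a single block. Your reflection/involution idea remains only a sketch; if it can be made precise it would yield a bijective proof of \Cref{cor:doubly} that the paper does not have, but as written the proposal does not yet supply one.
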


Furthermore, as \Cref{conj:Stanley} and \Cref{conj:Merzon}, we naturally conjecture that
\begin{conj}
    For every $n$, all permutations reaching the maximum $\Tilde{u}(n)$ are doubly layered permutations. Thus, there is a limit $\lim_{n \to \infty} \frac{\log_2 \Tilde{u}(n)}{n^2}$.
\end{conj}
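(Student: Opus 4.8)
The plan is to treat the two assertions separately. The clause ``there is a limit'' is a formal consequence of the first assertion: once $\tilde u(n) = \tilde v(n)$ for all $n$, the limit is exactly the one produced in \Cref{thm:asymptotic}, namely $\gamma/(2\log 2)$. In fact one does not need the exact equality for the limit alone --- it suffices to prove the two-sided estimate $\log_2 \tilde u(n) = (1 + o(1)) \log_2 \tilde v(n)$. Since $\mathcal{DL}_n \subseteq S_n$ gives $\tilde u(n) \ge \tilde v(n)$ for free, the entire difficulty is concentrated in an \emph{upper} bound on $\Phi_w = |\S_w(1,-1,\ldots)|$ valid for every $w \in S_n$.

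The first point to notice is that the crude bound $\Phi_w \le \Upsilon_w \le u(n)$, obtained by discarding signs in the $q=-1$ specialization, is too weak by a full factor of $2$ in the exponent. Indeed, \Cref{thm:Catalan for double layer} and the doubling identity behind \Cref{thm:asymptotic} show $\tilde v(n) = v(n)^{1/2 + o(1)}$, so a useful upper bound must genuinely capture the cancellation at $q=-1$ rather than merely bound $\Phi_w$ by the unsigned pipe-dream count. The proposed mechanism is a positive combinatorial model for $\Phi_w$: I would interpret the signed specialization, after taking absolute values, as a count of a class of ``doubled'' or domino (bumpless) pipe dreams, in which the $C_{k-1}^2$ pattern of \Cref{thm:Catalan for double layer} appears as an honest pairing of two independent Catalan families. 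Establishing such a model --- most plausibly by pushing the Billey--Holroyd--Young bijective proof of Macdonald's identity through the substitution $q=-1$ --- is the technical foundation on which the rest rests.

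With a positive model in hand, the strategy is to reduce the $q=-1$ problem to the already-studied $q=1$ problem by a folding map. Concretely, I would seek a map $w \mapsto \hat w \in S_{\lceil n/2\rceil}$ together with a ``square-root'' inequality $\Phi_w \le \Upsilon_{\hat w}^2$, designed to be tight on doubly layered permutations, where it recovers $\Phi_{w_2(2b_1,\ldots)} = \Upsilon_{w(b_1,\ldots)}^2$. Combined with the doubling identity $\tilde v(n) = v(\lceil n/2\rceil)^{2 + o(1)}$ and with \Cref{conj:Merzon} in the form $\Upsilon_{\hat w} \le u(\lceil n/2\rceil) = v(\lceil n/2\rceil)$, this gives $\tilde u(n) \le \tilde v(n)$. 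For the exact classification one then tracks the equality cases: equality in the folding bound should force the Rothe diagram of $w$ to be a descending staircase of solid blocks of even width (up to the single odd correction $r$ when $n$ is odd), i.e.\ $w$ doubly layered, while equality in Merzon--Smirnov forces $\hat w$ layered; the two conditions are simultaneously met precisely for doubly layered $w$.

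I expect the exact classification to be the real obstacle, and to be out of reach unconditionally: the folding argument above is only as strong as \Cref{conj:Merzon}, which is itself open and implies \Cref{conj:Stanley}. A realistic interim target is therefore the limit alone. For that it is enough to prove matching asymptotic bounds: the lower bound is \Cref{thm:asymptotic}, and for the upper bound one replaces the exact square-root inequality by its logarithmic version, bounding $\log_2 \Phi_w$ by a concave functional of the Rothe diagram of $w$ through a subadditivity (divide-and-conquer) estimate on the domino-pipe-dream model, and then showing by a rearrangement argument that this functional is asymptotically maximized by staircases of even-width blocks. The hardest technical point in either route is controlling $\Phi_w$ for permutations \emph{without} block structure, where the clean factorization of \Cref{thm:Catalan for double layer} is unavailable and the cancellation at $q=-1$ must be estimated directly.
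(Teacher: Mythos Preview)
The statement you are addressing is a \emph{conjecture} in the paper, not a theorem: the paper offers no proof of it and presents it explicitly as open, on a par with the Merzon--Smirnov conjecture (\Cref{conj:Merzon}) to which it is the $q=-1$ analogue. So there is no ``paper's own proof'' to compare your proposal against.

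Read as a research outline, your proposal is coherent and you are candid about its conditional nature, but it is not a proof and should not be labeled one. The concrete gaps are: (i) the positive ``domino pipe dream'' model for $\Phi_w$ is speculative --- nothing in the paper or in Billey--Holroyd--Young establishes such a model for general $w$, and the $C_{k-1}^2$ phenomenon of \Cref{thm:Catalan for double layer} is proved there only for the specific doubly layered family via determinantal cancellation, not via a bijection; (ii) the folding inequality $\Phi_w \le \Upsilon_{\hat w}^{\,2}$ is asserted without any mechanism for non--doubly-layered $w$, and there is no a priori reason the $q=-1$ cancellation should always be at least a square root's worth; (iii) even granting (i) and (ii), the classification step explicitly consumes \Cref{conj:Merzon}, which is open and already implies \Cref{conj:Stanley}. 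Your fallback plan for the limit alone likewise rests on an unproven subadditivity/rearrangement estimate for $\log_2 \Phi_w$. In short, the proposal correctly identifies where the difficulty lies --- controlling $\Phi_w$ for arbitrary $w$ --- but does not supply the missing idea, and the paper does not either; that is precisely why the statement is recorded as a conjecture.
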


The outline of the paper is as follows. In \Cref{sec:prelim} we give necessary background on bumpless pipe dreams and Lindstr\"om-Gessel-Viennot lemma. In \Cref{sec:main part} we prove our main results, which are \Cref{thm:Catalan for double layer} and \Cref{thm:asymptotic}. In \Cref{sec:multi} we show general results of principal specializations at roots of unity for multi-layered permutations.

\section{Preliminaries}\label{sec:prelim}
We write a permutation $w \in S_n$ in its one-line notation $w = w_1 w_2 ... w_n$. Given $u \in S_m$ and $v \in S_n$, we define the following permutation: \[u \times v  := (u_1,...,u_m,v_1+m,...,v_n+m) \in S_{m+n}.\] In particular, $1^m \times v = (1,...,m,v_1+m,...,v_n+m)$.
A property of Schubert polynomial (e.g., see \cite[(4.6)]{SchubertNotes}) shows that 
\begin{equation}\label{equ:prop}
\mathfrak{S}_{u \times v} = \mathfrak{S}_u \cdot \mathfrak{S}_{1^m \times v}.
\end{equation}

\subsection{Bumpless pipe dreams}\label{subsec:bumpless}
Bumpless pipe dreams were first introduced by Lam, Lee and Shimozono \cite{lam-lee-shimo} in their study of back stable Schubert calculus.
\begin{defin}
A bumpless pipe dream (BPD) of $w \in S_n$ is a filling of $[n] \times [n]$ by 
\begin{center}\includegraphics[scale=0.4]{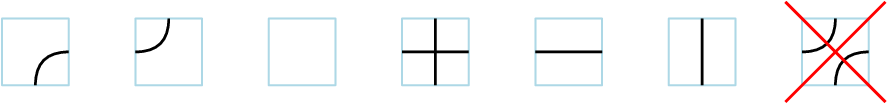}\end{center}
such that the pipe which starts at column $i$ ends at row $w(i)$ for any $i >0$, and that every pair of pipes crosses at most once.
\end{defin}
Denote the set of bumpless pipe dreams for $w$ as $\BPD(w)$. And for a BPD $D$ of $w$, let its set of empty tiles be $\empty(D)$. The weight of a bumpless pipe dream $D$ is defined as \[\wt(D) = \prod_{(i,j) \in \empty(D)} x_i.\]
Lam, Lee and Shimozono \cite{lam-lee-shimo} showed that Schubert polynomials can be calculated by the weight of bumpless pipe dreams.

\begin{theorem}[\cite{lam-lee-shimo}]\label{thm:BPD wt}
Given $n > 0$, for any $w \in S_n$, \[\mathfrak{S}_w = \sum_{D \in \BPD(w)} \wt(D).\]
\end{theorem}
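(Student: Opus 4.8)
The plan is to show that the generating function $\tilde{\mathfrak{S}}_w := \sum_{D \in \BPD(w)} \wt(D)$ coincides with $\mathfrak{S}_w$ by verifying that it satisfies the two properties characterizing Schubert polynomials: the normalization at the longest element and the divided-difference recursion. Recall that $\{\mathfrak{S}_w\}_{w \in S_n}$ is the unique family with $\mathfrak{S}_{w_0} = x_1^{n-1}x_2^{n-2}\cdots x_{n-1}$, where $w_0 = n\,(n{-}1)\cdots 1$ is the longest element, and with $\partial_i \mathfrak{S}_w = \mathfrak{S}_{w s_i}$ whenever $w(i) > w(i+1)$ (equivalently $\ell(ws_i) = \ell(w) - 1$), where $\partial_i f := (f - s_i f)/(x_i - x_{i+1})$. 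Since every $w$ is reached from $w_0$ by a chain of such descents, these two properties determine the entire family, so it suffices to check them for $\tilde{\mathfrak{S}}$. Throughout, I would keep in mind the sanity check that each $D \in \BPD(w)$ has exactly $\ell(w)$ empty tiles, so $\wt(D)$ is homogeneous of degree $\ell(w) = \deg \mathfrak{S}_w$.

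For the base case I would first argue that $\BPD(w_0)$ is a singleton. Since the pipe from column $i$ must reach row $w_0(i) = n + 1 - i$, and no pair of pipes may cross more than once, each pipe is forced into a single L-shaped route along the anti-diagonal (the Rothe route for the dominant permutation $w_0$); hence the diagram is unique and its empty tiles are exactly the staircase $\{(i,j) : i + j \le n\}$. Reading off the weight gives $\prod_{i+j \le n} x_i = x_1^{n-1}x_2^{n-2}\cdots x_{n-1}$, matching $\mathfrak{S}_{w_0}$. I would establish uniqueness directly rather than from the weight count, since the latter would be circular.

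The crux, and the main obstacle, is the divided-difference recursion: assuming $w(i) > w(i+1)$, I must prove $\partial_i \tilde{\mathfrak{S}}_w = \tilde{\mathfrak{S}}_{ws_i}$. The strategy is to localize to rows $i$ and $i+1$. I would partition $\BPD(w)$ into groups according to the data of the diagram outside these two rows, so that within each group the only freedom is the pipe configuration in the two adjacent rows; the descent at $i$ restricts these to an enumerable set of patterns. For each group I would introduce a \emph{droop}-type local move on rows $i,i+1$ that carries a BPD of $w$ to BPDs of $ws_i$, tracking exactly how $\wt$ changes as empty tiles are removed from or redistributed between the two rows, so as to realize the action of $\partial_i$ on the associated monomials. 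Concretely, since
\[\partial_i\!\left(x_i^{\,a} x_{i+1}^{\,b}\right) = \sum_{c=0}^{a-b-1} x_i^{\,a-1-c}\, x_{i+1}^{\,b+c}\qquad (a>b),\]
the droop moves within a group must produce precisely this telescoping sum of empty-tile redistributions, while any group whose row-$(i,i+1)$ contribution is symmetric in $x_i$ and $x_{i+1}$ must contribute $0$. Showing that these moves are well-defined, reversible, and hit every element of $\BPD(ws_i)$ exactly once with the correct weight bookkeeping is the technical heart of the proof.

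Finally, I would note an alternative route that sidesteps the direct recursion: combine the classical formula $\mathfrak{S}_w = \sum_{P} \prod_{(i,j) \in P} x_i$ over reduced (ordinary) pipe dreams with a weight-preserving bijection between ordinary pipe dreams and bumpless pipe dreams of $w$, obtained by iterated droop/undroop moves. This reduces the theorem to a purely combinatorial correspondence; however, constructing that bijection and proving it weight-preserving is of comparable difficulty to the recursion, so I would present the divided-difference argument as the main line and mention the bijective approach only as corroboration.
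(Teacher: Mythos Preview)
The paper does not prove this theorem: it is stated with attribution to Lam, Lee and Shimozono \cite{lam-lee-shimo} and used as a black box, so there is no ``paper's own proof'' to compare your proposal against. Your outline is therefore not being measured against anything here.

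That said, as a standalone plan your approach is reasonable in shape but only a sketch at the crucial point. The base case at $w_0$ is fine. The divided-difference step, however, is where all the content lies, and you have described the \emph{shape} of an argument (group BPDs by their configuration outside rows $i,i{+}1$, exhibit local droop moves that realize the telescoping identity for $\partial_i(x_i^a x_{i+1}^b)$) without carrying out any of it. For bumpless pipe dreams this step is genuinely delicate: the interaction of pipes in rows $i$ and $i{+}1$ is more intricate than for ordinary pipe dreams because of the absence of bump tiles, and verifying that your proposed local moves give a weight-respecting bijection $\BPD(w)\to\BPD(ws_i)$ compatible with $\partial_i$ requires a careful case analysis that you have not supplied. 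Neither the original proof in \cite{lam-lee-shimo} (which proceeds via back-stable Schubert calculus and does not verify $\partial_i$ directly on BPDs) nor the later bijective proofs make this step trivial. Your alternative route via a bijection with ordinary pipe dreams is also a known strategy, but again the bijection itself is a substantial construction, not a remark. In short: the plan is sound, but what you have written is an outline, not a proof.
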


\subsection{The Lindstr\"om-Gessel-Viennot lemma}\label{subsec:Lindstrom}
The Lindstr\"om-Gessel-Viennot lemma is a useful method to compute the weighted sum of non-intersecting lattice paths. See Section 2 of \cite{ec1} for details. 

Let $G$ be an acyclic weighted directed graph. For an edge $e$, let $\wt_e$ be its weight and for a directed path $P$, let its weight $\wt(P)$ be the product of the weights of its edges. For any pair of vertices $x$ and $y$, we denote by $\wt(x,y)$ the sum of weights of all paths from $x$ to $y$. Now consider two sets of points $S = \{s_1,...,s_n\}$ and $T = \{t_1,...,t_n\}$ in $G$. Define 
\[
M_{S,T}:=\left(\begin{array}{cccc}
\wt(s_1, t_1) & \wt(s_1, t_2) & \cdots & \wt(s_1, t_n) \\
\wt(s_2, t_1) & \wt(s_2, t_2) & \cdots & \wt(s_2, t_n) \\
\vdots & \vdots & \ddots & \vdots \\
\wt(s_n, t_1) & \wt(s_n, t_2) & \cdots & \wt(s_n, t_n)
\end{array}\right).
\]
An $n$-path from $S$ to $T$ means an $n$-tuple $P = (P_1,\ldots,P_n)$ of paths with each $P_i$ goes from $s_i$ to $t_{\sigma(i)}$, where $\sigma \in S_n$ is a permutation. Denote by $\sigma(P)$ the permutation $\sigma$ as above. An $n$-path is \emph{non-intersecting} if no paths share a common vertex. The Lindstr\"om-Gessel-Viennot lemma states that the determinant of $M_{S,T}$ equals the signed sum over all $n$-tuples $P$ of non-intersecting paths from $S$ to $T$.

\begin{lemma}\label{lemma:Lindstrom}
\[\det(M) = \sum_{
\substack{P = (P_1, \ldots, P_n): S \rightarrow T\\
\mathrm{non{-}intersecting}}}
\operatorname{sign}(\sigma(P)) \prod_{i=1}^n\w(P_i).\]
\end{lemma}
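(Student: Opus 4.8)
The plan is to run the standard expansion-plus-involution argument. First I would expand the determinant from its definition,
\[\det(M) = \sum_{\sigma \in S_n} \operatorname{sign}(\sigma) \prod_{i=1}^n \wt(s_i, t_{\sigma(i)}).\]
Since $\wt(s_i, t_{\sigma(i)})$ is by definition the sum of $\w(P_i)$ over all paths $P_i : s_i \to t_{\sigma(i)}$, distributing the product over these sums rewrites each term as a sum over $n$-tuples $P = (P_1,\ldots,P_n)$ with $P_i : s_i \to t_{\sigma(i)}$, i.e.\ with $\sigma(P) = \sigma$. Collecting over all $\sigma$ then gives
\[\det(M) = \sum_{P : S \to T} \operatorname{sign}(\sigma(P)) \prod_{i=1}^n \w(P_i),\]
the sum ranging over \emph{all} $n$-paths, intersecting or not. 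It remains to show the intersecting $n$-paths contribute zero.

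Second, I would construct a weight-preserving, sign-reversing involution $\iota$ on the set of intersecting $n$-paths. Given such a $P$, let $i$ be the smallest index for which $P_i$ meets another of the paths; using acyclicity to order the vertices along $P_i$, let $v$ be the first vertex on $P_i$ that also lies on some $P_{i'}$ with $i' \neq i$, and let $j$ be the smallest such index $i'$. Define $\iota(P)$ by swapping the portions of $P_i$ and $P_j$ after $v$: the new $P_i$ is the initial segment of $P_i$ up to $v$ concatenated with the final segment of $P_j$ from $v$ onward, and symmetrically for the new $P_j$, with all other paths unchanged.

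Third, I would verify the two defining properties. Weight preservation is immediate: $\iota$ merely redistributes the same multiset of edges between $P_i$ and $P_j$. For the sign, the new $P_i$ now runs from $s_i$ to $t_{\sigma(j)}$ and the new $P_j$ from $s_j$ to $t_{\sigma(i)}$, so $\sigma(\iota(P)) = \sigma(P) \circ (i\,j)$ and hence $\operatorname{sign}(\sigma(\iota(P))) = -\operatorname{sign}(\sigma(P))$; in particular $\iota$ is fixed-point-free. Finally, $\iota$ is an involution because the data $(i,v,j)$ read off from $\iota(P)$ coincides with that of $P$: the initial segments of all paths up to the relevant vertices are unchanged, so $i$ and $v$ are recovered, and the set of indices whose path passes through $v$ is unchanged, so $j$ is recovered. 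Thus the intersecting $n$-paths cancel in sign-reversing pairs and only the non-intersecting $n$-paths survive, yielding the stated identity.

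The main obstacle is making the involution genuinely well-defined and self-inverse. One must invoke acyclicity so that ``the first intersection point along $P_i$'' is meaningful and so that the swapped-in initial segments cannot acquire a new, earlier intersection; and one must fix the selection rule for $j$ (e.g.\ smallest index through $v$) precisely enough that it is reproduced after applying $\iota$. Once these bookkeeping points are settled, the remainder of the argument is routine.
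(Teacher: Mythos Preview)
Your argument is the standard Lindstr\"om--Gessel--Viennot proof and is correct as outlined; the bookkeeping you flag (that the selection data $(i,v,j)$ is recovered after the tail-swap because the multiset of vertices on $\{P_i,P_j\}$ is unchanged and the initial segments are untouched) goes through. The paper, however, does not prove this lemma at all: it is quoted as a classical result with a reference to Stanley's \emph{Enumerative Combinatorics}, so there is no in-paper proof to compare against. Your write-up simply supplies the well-known argument that the paper chose to cite rather than reproduce.
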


\section{Principal specializations for doubly layered permutations}\label{sec:main part}
In this section, we prove our main results \Cref{thm:Catalan for double layer} and \Cref{thm:asymptotic}. The idea is to compare $\Phi_w$ for doubly layered permutations with $\Upsilon_w$ for layered permutations. Principal specializations can be computed via BPDs by \Cref{thm:BPD wt}. We will construct certain weighted graphs and convert BPDs to $n$-paths. From \Cref{lemma:Lindstrom}, weighted sum of $n$-paths can be represented by a matrix determinant. 
% The following definitions are needed.

\begin{defin}\label{def:gen stair}
Given $n = m+p$, where $m,p \in \mathbb{Z}_{>0}$, we define a \emph{generalized staircase partition} $\rho_n^m$ as \[\rho _n^m = \rho_{m+p}^m = (\underbrace{m+p,\ldots,m+p}_m,m+p-1,\ldots, m).\]
In particular, when $m = 1$, $\rho_n^1 = \rho_n := (n,n-1,\ldots,1)$ is the normal staircase partition.
\end{defin}
\begin{defin}\label{def:stair graph}
    Let $\rho$ be a generalized staircase partition, we construct a directed graph $G_{\rho}$ as follows.
    \begin{enumerate}
        \item Put a vertex in each box of the Young diagram $\rho$.
        \item Build an edge between every pair of adjacent boxes upwards and rightwards.
        \item Let each edge have weight $1$.
    \end{enumerate}
\end{defin}
See \Cref{fig:layer} for example of $\rho_5$ and $G_{\rho_5}$ where each green edge is of weight $1$.
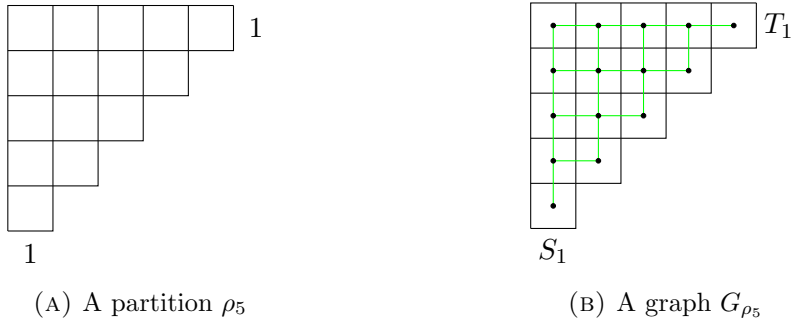
\begin{figure}[h]
    \centering
    \begin{subfigure}{.45\textwidth}
    \centering
    \begin{tikzpicture}[scale = 0.6]
        \foreach \i in {0,1,2,3,4} \draw (\i,\i)--(\i+1,\i)--(\i+1,\i+1);
        \draw (0,0)--(0,5)--(5,5);
        \draw (1,1)--(1,5) (2,2)--(2,5) (3,3)--(3,5) (4,4)--(4,5);
        \draw (0,4)--(4,4) (0,3)--(3,3) (0,2)--(2,2) (0,1)--(1,1);
        \node at (0.5,-0.5) {$1$}; \node at (5.5,4.5) {$1$};
    \end{tikzpicture}
    \caption{A partition $\rho_5$}
    \label{subfig:layer_BPD}
    \end{subfigure}
    \begin{subfigure}{.45\textwidth}
    \centering
    \begin{tikzpicture}[scale = 0.6]
        \foreach \i in {0,1,2,3,4} \draw (\i,\i)--(\i+1,\i)--(\i+1,\i+1);
        \draw (0,0)--(0,5)--(5,5);
        \draw (1,1)--(1,5) (2,2)--(2,5) (3,3)--(3,5) (4,4)--(4,5);
        \draw (0,4)--(4,4) (0,3)--(3,3) (0,2)--(2,2) (0,1)--(1,1);
        \node at (0.5,-0.5) {$S_1$}; \node at (5.5,4.5) {$T_1$};
        \foreach \i in {0.5,1.5,2.5,3.5} \draw [color = green] (\i,\i)--(\i,4.5) (0.5,\i+1)--(\i+1,\i+1);
        \foreach \i in {0.5,1.5,2.5,3.5,4.5} \filldraw (0.5,\i) circle (.05);
        \foreach \i in {1.5,2.5,3.5,4.5} \filldraw (1.5,\i) circle (.05);
        \foreach \i in {2.5,3.5,4.5} \filldraw (2.5,\i) circle (.05);
        \filldraw (3.5,3.5) circle (.05) (3.5,4.5) circle (.05) (4.5,4.5) circle (.05);
    \end{tikzpicture}
    \caption{A graph $G_{\rho_5}$}
    \label{subfig:layer_graph}
    \end{subfigure}
    \caption{A staircase partition and its corresponding graph}
    \label{fig:layer}
\end{figure}

\begin{defin}\label{def:gen double stair}
    Given $n = 2m+2p+r$ where $r = 0$ or $1$ is the remainder of $n \bmod 2$, we define a \emph{generalized double staircase partition} $\Tilde{\rho}_{n}^{2m}$ as
    \[ \Tilde{\rho}_{n}^{2m} := 
    \begin{cases}
         (\underbrace{n,\ldots,n}_{2m},n-2,n-2,\ldots,2m,2m) & \text{ if } n \text{ is even}, \\
         (\underbrace{n,\ldots,n}_{2m},n-2,n-2,\ldots,2m+1,2m+1,2m) & \text{ if } n \text{ is odd}.
    \end{cases}\]
In prticular, we write $\Tilde{\rho}_n := \Tilde{\rho}_n^2 = \begin{cases} (n,n,n-2,n-2,\ldots,2,2) & \text{ if } n \text{ is even} \\ (n,n,n-2,n-2,\ldots,3,3,2) & \text{ if } n \text{ is odd} \end{cases}$ for simplicity when $m = 1$.
\end{defin}
\begin{defin}\label{def:double stair graph}
Let $\Tilde{\rho}$ be a generalized double staircase partition. We construct a directed graph $G_{\Tilde{\rho}}$ as follows.
\begin{enumerate}
    \item Put a vertex in each box of the Young diagram $\Tilde{\rho}$.
    \item Build an edge between every pair of adjacent boxes upwards and rightwards.
    % \item Assign weight $1$ to each vertical edge and weight $(-1)^{j-1}$ to each horizontal edge on row $j$.
    \item Let each vertical edge have weight $1$ and each horizontal edge on row $j$ have weight $(-1)^{j-1}$.
\end{enumerate}
\end{defin}
See \Cref{subfig:c} where each green edge has weight $1$ and red edge has weight $-1$. 

In a lattice graph, we denote by $(i,j)$ the vertex in the $i$-th row from top to bottom, in the $j$-th column from left to right.  
\begin{figure}[h]
\centering
\begin{subfigure}{.45\textwidth}
    \centering
    \begin{tikzpicture}[scale = 0.6]
        \draw (0,0) grid (6,6);
        \draw [line width = 2pt] (0,6)--(0,0)--(1,0)--(2,0)--(2,1)--(2,2)--(3,2)--(4,2)--(4,3)--(4,4)--(5,4)--(6,4)--(6,5)--(6,6)--(0,6);
        \node (s_1) at (0.5,-0.5) {1};\node (t_1) at (6.5,5.5) {1};
        \node (s_2) at (1.5,-0.5) {2};\node (t_2) at (6.5,4.5) {2};
        \node (s_3) at (2.5,-0.5) {3};\node (t_3) at (6.5,3.5) {5};
        \node (s_4) at (3.5,-0.5) {4};\node (t_4) at (6.5,2.5) {6};
        \node (s_5) at (4.5,-0.5) {5};\node (t_5) at (6.5,1.5) {3};
        \node (s_6) at (5.5,-0.5) {6};\node (t_6) at (6.5,0.5) {4};
        \node (1) at (1.5,4.5) {-1};\node (2) at (2.5,4.5) {-1};\node (3) at (1.5,3.5) {1};\node (4) at (2.5,3.5) {1};
        \draw [path] (1.5,0)--(1.5,2.5)--(3.5,2.5)--(3.5,4.5)--(6,4.5);
        \draw [path] (0.5,0)--(0.5,5.5)--(6,5.5);
        \draw [path] (2.5,0)--(2.5,1.5)--(6,1.5);
        \draw [path] (3.5,0)--(3.5,0.5)--(6,0.5);
        \draw [path] (4.5,0)--(4.5,3.5)--(6,3.5);
        \draw [path] (5.5,0)--(5.5,2.5)--(6,2.5);
    \end{tikzpicture}
    \caption{}
    \label{subfig:a}
\end{subfigure}
\begin{subfigure}{.45\textwidth}
    \centering
    \begin{tikzpicture}[scale = 0.6]
        \draw [line width = 2pt] (0,6)--(0,0)--(1,0)--(2,0)--(2,1)--(2,2)--(3,2)--(4,2)--(4,3)--(4,4)--(5,4)--(6,4)--(6,5)--(6,6)--(0,6);
        \draw (1,0)--(1,6);\draw (0,1)--(2,1);
        \draw (2,2)--(2,6);\draw (0,2)--(2,2);
        \draw (3,2)--(3,6);\draw (0,3)--(4,3);
        \draw (4,4)--(4,6);\draw (0,4)--(4,4);
        \draw (5,4)--(5,6);\draw (0,5)--(6,5);
        \node (s_1) at (0.5,-0.5) {1};
        \node (s_2) at (1.5,-0.5) {2};
        \node (t_1) at (6.5,5.5) {1};
        \node (t_2) at (6.5,4.5) {2};
        \node (1) at (1.5,4.5) {-1};\node (2) at (2.5,4.5) {-1};\node (3) at (1.5,3.5) {1};\node (4) at (2.5,3.5) {1};
        \draw [path] (1.5,0)--(1.5,2.5)--(3.5,2.5)--(3.5,4.5)--(6,4.5);
        \draw [path] (0.5,0)--(0.5,5.5)--(6,5.5);
    \end{tikzpicture}
    \caption{}
    \label{subfig:b}
\end{subfigure}
\begin{subfigure}{.45\textwidth}
    \centering
    \begin{tikzpicture}[scale = 0.6]
        \draw [line width = 2pt] (0,6)--(0,0)--(1,0)--(2,0)--(2,1)--(2,2)--(3,2)--(4,2)--(4,3)--(4,4)--(5,4)--(6,4)--(6,5)--(6,6)--(0,6);
        \draw (1,0)--(1,6);\draw (0,1)--(2,1);
        \draw (2,2)--(2,6);\draw (0,2)--(2,2);
        \draw (3,2)--(3,6);\draw (0,3)--(4,3);
        \draw (4,4)--(4,6);\draw (0,4)--(4,4);
        \draw (5,4)--(5,6);\draw (0,5)--(6,5);
        \foreach \i in {0.5,1.5,2.5,3.5,4.5,5.5} \filldraw (0.5,\i) circle (.05) (1.5,\i) circle (.05);
        \foreach \i in {2.5,3.5,4.5,5.5} \filldraw (2.5,\i) circle (.05) (3.5,\i) circle (.05);
        \foreach \i in {4.5,5.5} \filldraw (4.5,\i) circle (.05) (5.5,\i) circle (.05);
        \draw [color = green] (0.5,0.5)--(0.5,5.5) (1.5,0.5)--(1.5,5.5) (2.5,2.5)--(2.5,5.5) (3.5,2.5)--(3.5,5.5) (4.5,4.5)--(4.5,5.5) (5.5,4.5)--(5.5,5.5);
        \draw [color = green] (0.5,5.5)--(5.5,5.5) (0.5,3.5)--(3.5,3.5) (0.5,1.5)--(1.5,1.5);
        \draw [color = red] (0.5,4.5)--(5.5,4.5) (0.5,2.5)--(3.5,2.5) (0.5,0.5)--(1.5,0.5);
        \node (1) at (6.5,5.5) {$T_1$};
        \node (2) at (6.5,4.5) {$T_2$};
        \node (3) at (0.5,-0.5) {$S_1$};
        \node (4) at (1.5,-0.5) {$S_2$};
    \end{tikzpicture}
    \caption{}
    \label{subfig:c}
\end{subfigure}
\begin{subfigure}{.45\textwidth}
    \centering
    \begin{tikzpicture}[scale = 0.6]
        \draw [line width = 2pt] (0,6)--(0,0)--(1,0)--(2,0)--(2,1)--(2,2)--(3,2)--(4,2)--(4,3)--(4,4)--(5,4)--(6,4)--(6,5)--(6,6)--(0,6);
        \draw (1,0)--(1,6);\draw (0,1)--(2,1);
        \draw (2,2)--(2,6);\draw (0,2)--(2,2);
        \draw (3,2)--(3,6);\draw (0,3)--(4,3);
        \draw (4,4)--(4,6);\draw (0,4)--(4,4);
        \draw (5,4)--(5,6);\draw (0,5)--(6,5);
        \foreach \i in {0.5,1.5,2.5,3.5,4.5,5.5} \filldraw (0.5,\i) circle (.05) (1.5,\i) circle (.05);
        \foreach \i in {2.5,3.5,4.5,5.5} \filldraw (2.5,\i) circle (.05) (3.5,\i) circle (.05);
        \foreach \i in {4.5,5.5} \filldraw (4.5,\i) circle (.05) (5.5,\i) circle (.05);
        \draw [color = green] (0.5,0.5)--(0.5,5.5) (1.5,0.5)--(1.5,5.5) (2.5,2.5)--(2.5,5.5) (3.5,2.5)--(3.5,5.5) (4.5,4.5)--(4.5,5.5) (5.5,4.5)--(5.5,5.5);
        \draw [color = green] (0.5,5.5)--(5.5,5.5) (0.5,3.5)--(3.5,3.5) (0.5,1.5)--(1.5,1.5);
        \draw [color = red] (0.5,4.5)--(5.5,4.5) (0.5,2.5)--(3.5,2.5) (0.5,0.5)--(1.5,0.5);
        \draw [dashed, color = blue, line width = 2pt] (1.5,0.5)--(1.5,2.5)--(3.5,2.5)--(3.5,4.5)--(5.5,4.5);
        \draw [dashed, color = blue, line width = 2pt] (0.5,0.5)--(0.5,5.5)--(5.5,5.5);
        \node (1) at (6.5,5.5) {$T_1$};
        \node (2) at (6.5,4.5) {$T_2$};
        \node (3) at (0.5,-0.5) {$S_1$};
        \node (4) at (1.5,-0.5) {$S_2$};
    \end{tikzpicture}
    \caption{}
    \label{subfig:d}
\end{subfigure}
\caption{A BPD of $w = (1,2,5,6,3,4)$}
\label{fig:BPD-eg}
\end{figure}
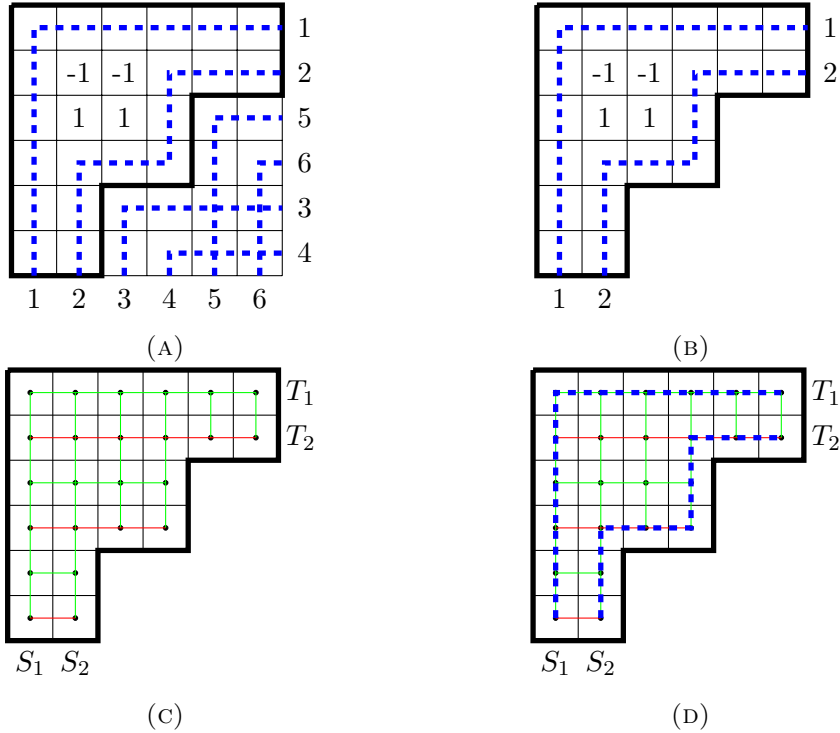
\subsection{Catalan numbers in doubly layered permutation} 
Catalan numbers show up in natural ways in the principal specialization of interest. In this subsection, we construct a graph $\Tilde{G}$ corresponding to a double staircase partition $\Tilde{\rho}$. Then, for a fixed doubly layered permutation $w$, each bumpless pipe dream of $w$ in $\Tilde{\rho}$ corresponds to a $2$-path in the graph $\Tilde{G}$. We calculate the principal specialization to prove \Cref{thm:Catalan for double layer} by converting the weighted sum of BPDs to the weighted sum of $2$-paths and applying \Cref{thm:BPD wt} and \Cref{lemma:Lindstrom}.

%%First we look at the principal specialization $\Upsilon_w$ for layered permutataion $w(1,n-1) = (1,n,n-1,\ldots,2)$. For every $D \in \BPD(w(1,n{-}1))$, each pipe $i$ of $D$ is fixed for any $i \ge 2$. The remaining boxes form a staircase partition $\rho_n$. Pipe $1$ in $\rho_n$ naturally form a path $P$ in $G_{\rho_n}$. Therefore there is a natural bijection between $\BPD(w(1,n-1))$ and paths from $S_1$ to $T_1$ in $G_{\rho_n}$, which are exactly Dyck paths of semilength $n$. Moreover, each $D$ and each $P$ have the same weight of $1$ due to the weight settings. As a result,  
%%\[\Upsilon_{w(1,n-1)} = \sum_{D \in \BPD{}\\(w(1,n-1))}\wt(D) = \sum_{P:S_1 \rightarrow T_1}\wt(P) = C_{n{-}1}.\]
Now we focus on $\Phi_w$ for doubly layered permutation $w_2(2,n-2)$. For a $\BPD$ $D$ of $w_2(2,2k-2,r)$, pipe $i$ is fixed for any $i \ge 3$. The remaining boxes form a partition $\Tilde{\rho}_n$. Pipes $1$ and $2$ in $\Tilde{\rho}_n$ naturally form a pair of non-intersecting paths $P_1$ and $P_2$ in $G_{\Tilde{\rho}}$, see \Cref{fig:BPD-eg} for an example. Denote by $P_{G_{\rho}}$ the weighted sum all over non-intersecting 2-paths $P=(P_1,P_2)$ from $S = \{S_1,S_2\}$ to $T=\{T_1,T_2\}$. We have the follow lemma.

\begin{lemma}\label{lemma:weight-equ}
    For each bumpless pipe dream D of $w_2(2,n-2)$ and its corresponding directed $2$-path $P=(P_1, P_2)$ in $\Tilde{G}_{\Tilde{\rho}}$, we have \[\wt(D) = \delta(n) \cdot \wt(P),\] where $\wt(P) = \wt(P_1)\wt(P_2)$ and $\delta(n) = \begin{cases}
        -1 & n \equiv 3 \mod 4, \\
         1 & otherwise. \end{cases}$
\end{lemma}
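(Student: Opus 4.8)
The plan is to prove the identity cell-by-cell, keeping track only of the parities of row indices. At $q=-1$, i.e.\ $x_i=(-1)^{i-1}$, the weight $\wt(D)=\prod_{(i,j)\in\empty(D)}x_i$ becomes the sign $\prod_{(i,j)\in\empty(D)}(-1)^{i-1}$, while $\wt(P)=\wt(P_1)\wt(P_2)$ is the product over the horizontal edges $e$ of $P_1$ and $P_2$ of $(-1)^{\mathrm{row}(e)-1}$. Both are $\pm1$, so proving $\wt(D)=\delta(n)\wt(P)$ is equivalent to the parity statement $\wt(D)\cdot\wt(P)=\delta(n)$.

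First I would record the structural facts carried by the correspondence set up above. Since $w_2(2,n-2)$ fixes $1$ and $2$, pipes $1$ and $2$ do not cross, so inside $\tilde{\rho}_n$ they occupy disjoint sets of cells; each is a monotone lattice path, pipe $1$ running from $(n,1)$ to $(1,n)$ and pipe $2$ from $(n,2)$ to $(2,n)$; and because the forced pipes $3,\dots,n$ fill $[n]^2\setminus\tilde{\rho}_n$, every empty tile of $D$ lies in $\tilde{\rho}_n$. Fix a row $r$ of $\tilde{\rho}_n$ and put $\lambda_r:=(\tilde{\rho}_n)_r$. Its $\lambda_r$ cells split into $s_{1,r}$ cells on pipe $1$, $s_{2,r}$ cells on pipe $2$, and $\lambda_r-s_{1,r}-s_{2,r}$ empty cells. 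A monotone path meets each row it visits in one contiguous horizontal run, so $P_k$ contributes exactly $s_{k,r}-1$ horizontal edges in row $r$ whenever pipe $k$ visits that row; pipe $1$ visits every row $1,\dots,n$, and pipe $2$ visits rows $2,\dots,n$. Two boundary rows need a word of care: row $1$ carries no empty cell and contributes nothing (its factor $r-1$ is $0$), and in row $n$ the non-crossing condition forces $s_{1,n}=s_{2,n}=1$.

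Collecting contributions, the exponent of $-1$ in $\wt(D)\cdot\wt(P)$ equals
\[
\sum_{r}(r-1)\Big[\big(\lambda_r-s_{1,r}-s_{2,r}\big)+\big(s_{1,r}-1\big)+[\,r\ge2\,]\big(s_{2,r}-1\big)\Big].
\]
Row $1$ drops out; odd rows $r\ge3$ drop out since $r-1$ is even; and for each even $r$ the bracket telescopes to $\lambda_r-2\equiv\lambda_r\pmod2$. Hence this exponent is $\equiv\sum_{r\text{ even}}(\tilde{\rho}_n)_r\pmod2$, so $\wt(D)\cdot\wt(P)=(-1)^{\sum_{r\text{ even}}(\tilde{\rho}_n)_r}$, which in particular does not depend on $D$. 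It then remains only to identify this sign: reading $\tilde{\rho}_n$ off \Cref{def:gen double stair}, for $n$ even the even rows have lengths $n,n-2,\dots,2$, summing to $(n/2)(n/2+1)$, which is even; for $n=2t+1$ the even rows have lengths $n,n-2,\dots,3$, summing to $\tfrac14(n-1)(n+3)=t(t+2)\equiv t\pmod2$, which is odd exactly when $t$ is odd, i.e.\ $n\equiv3\pmod4$. Thus $(-1)^{\sum_{r\text{ even}}(\tilde{\rho}_n)_r}=\delta(n)$.

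The hard part is not the arithmetic but justifying the structural inputs: that the forced pipes $3,\dots,n$ really tile $[n]^2\setminus\tilde{\rho}_n$ with no leftover empty cell — equivalently, that pipes $1,2$ always cover exactly $(2n-1)+(2n-3)$ cells of $\tilde{\rho}_n$, so that $|\empty(D)|=|\tilde{\rho}_n|-(4n-4)$ equals $\ell(w_2(2,n-2))$ and nothing is empty outside — together with the monotonicity and non-crossing of pipes $1,2$ that make the $2$-path correspondence well defined. Granting these, the weight identity is exactly the parity bookkeeping above. As a coordinate-free cross-check one may instead note that a single local flip re-routing one path around a unit box moves a horizontal edge between two adjacent rows and moves one empty cell diagonally, multiplying both $\wt(D)$ and $\wt(P)$ by $-1$; hence $\wt(D)/\wt(P)$ is constant on the set of admissible $2$-paths (connected under such flips) and equals its value on the $2$-path hugging the upper-left border of $\tilde{\rho}_n$, which one evaluates directly to $\delta(n)$.
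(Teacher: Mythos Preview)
Your argument is correct. One harmless slip: row $1$ can in fact carry empty cells (if pipe $1$ enters row $1$ at some column $c>1$, the cells $(1,1),\dots,(1,c-1)$ are blank), but since the factor $r-1$ vanishes there your displayed formula and conclusion are unaffected.

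Your main route differs from the paper's. The paper verifies the identity only on the Rothe BPD $D_0$ (both pipes going straight up then straight right), splitting into the cases $n=2k$, $n=4k+1$, $n=4k+3$, and then uses that every BPD of $w_2(2,n-2)$ is reachable from $D_0$ by a chain of simple droops, each of which flips the sign of both $\wt(D)$ and $\wt(P)$. You instead compute $\wt(D)\cdot\wt(P)$ directly as a row-by-row parity sum, obtaining a closed form that depends only on the even-row lengths of $\Tilde{\rho}_n$ and not on $D$. The paper's argument is shorter but relies on droop-connectivity of this family of BPDs; your argument sidesteps connectivity entirely, handling each $D$ on its own. Your closing ``coordinate-free cross-check'' via local flips is exactly the paper's proof.
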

\begin{proof}
First, we consider the \emph{Rothe} BPD $D_0$ in which pipe $1$ and pipe $2$ both go straight up and straight right and its corresponding $2$-path $P_0$ (See \Cref{fig:certain BPD}). It is not difficult to verify the equality above for $n = 2k$, $n = 4k+1$ and $n = 4k+3$, respectively. 
\begin{figure}[h]
\centering
\begin{subfigure}{.45\textwidth}
    \centering
    \begin{tikzpicture}[scale = 0.4]
        \draw (0,6)--(0,0)--(1,0)--(2,0)--(2,1)--(2,2)--(3,2)--(4,2)--(4,3)--(4,4)--(5,4)--(6,4)--(6,5)--(6,6)--(0,6);
        \draw (1,0)--(1,6);\draw (0,1)--(2,1);
        \draw (2,2)--(2,6);\draw (0,2)--(2,2);
        \draw (3,2)--(3,6);\draw (0,3)--(4,3);
        \draw (4,4)--(4,6);\draw (0,4)--(4,4);
        \draw (5,4)--(5,6);\draw (0,5)--(6,5);
        \draw [path] (1.5,0)--(1.5,4.5)--(6,4.5);
        \draw [path] (0.5,0)--(0.5,5.5)--(6,5.5);
    \end{tikzpicture}
    \caption{BPD $D_0$}
    \label{subfig:D_0}
\end{subfigure}
\begin{subfigure}{.45\textwidth}
    \centering
    \begin{tikzpicture}[scale = 0.4]
        \draw (0,6)--(0,0)--(1,0)--(2,0)--(2,1)--(2,2)--(3,2)--(4,2)--(4,3)--(4,4)--(5,4)--(6,4)--(6,5)--(6,6)--(0,6);
        \draw (1,0)--(1,6);\draw (0,1)--(2,1);
        \draw (2,2)--(2,6);\draw (0,2)--(2,2);
        \draw (3,2)--(3,6);\draw (0,3)--(4,3);
        \draw (4,4)--(4,6);\draw (0,4)--(4,4);
        \draw (5,4)--(5,6);\draw (0,5)--(6,5);
        \foreach \i in {0.5,1.5,2.5,3.5,4.5,5.5} \filldraw (0.5,\i) circle (.05) (1.5,\i) circle (.05);
        \foreach \i in {2.5,3.5,4.5,5.5} \filldraw (2.5,\i) circle (.05) (3.5,\i) circle (.05);
        \foreach \i in {4.5,5.5} \filldraw (4.5,\i) circle (.05) (5.5,\i) circle (.05);
        \draw [color = green] (0.5,0.5)--(0.5,5.5) (1.5,0.5)--(1.5,5.5) (2.5,2.5)--(2.5,5.5) (3.5,2.5)--(3.5,5.5) (4.5,4.5)--(4.5,5.5) (5.5,4.5)--(5.5,5.5);
        \draw [color = green] (0.5,5.5)--(5.5,5.5) (0.5,3.5)--(3.5,3.5) (0.5,1.5)--(1.5,1.5);
        \draw [color = red] (0.5,4.5)--(5.5,4.5) (0.5,2.5)--(3.5,2.5) (0.5,0.5)--(1.5,0.5);
        \draw [dashed, color = blue, line width = 2pt] (1.5,0.5)--(1.5,4.5)--(5.5,4.5);
        \draw [dashed, color = blue, line width = 2pt] (0.5,0.5)--(0.5,5.5)--(5.5,5.5);
    \end{tikzpicture}
    \caption{$2$-path $P_0$}
    \label{subfig:2-path}
\end{subfigure}
\caption{BPD $D_0$ and its corresponding $2$-path}
\label{fig:certain BPD}
\end{figure}
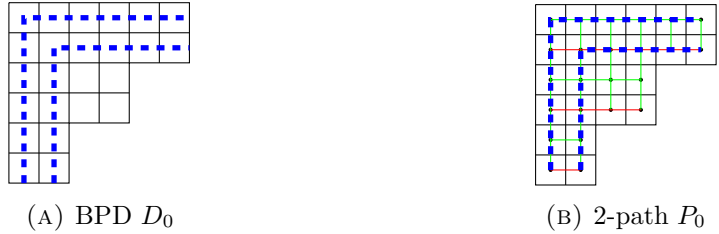

Next, recall that a \emph{simple droop} on a BPD is a local move that turns a part of pipe route $(i,j)\rightarrow(i-1,j)\rightarrow(i-1,j+1)$ to $(i,j)\rightarrow(i,j+1)\rightarrow(i-1,j+1)$ (see \Cref{fig:flip} and \cite{lam-lee-shimo}). For such $w$ of interest, each BPD $D$ can be obtained by a certain sequence $(F_1,...,F_t)$ of simple droops from $D_0$. Meanwhile, the corresponding $2$-path $P$ is also obtained by the same sequence $(F_1,...,F_t)$ of simple droops from $P_0$. A simple droop changes the weight to its opposite number. Therefore, 
\[\wt(D) = \wt(D_0)\cdot (-1)^t = \delta(n) \cdot \wt(P_0) \cdot (-1)^t = \delta(n) \cdot \wt(P)\]
holds for any BPD $D$ and its corresponding $2$-path $P$. 
\begin{figure}
    \centering
    \begin{tikzpicture}[scale = 0.6]
    \draw (0,0) rectangle (2,2);
    \draw (1,0)--(1,2) (0,1)--(2,1);
    \draw[path] (0.5,0.5)--(0.5,1.5)--(1.5,1.5);
    \node at (3,1) {$\longrightarrow$};
    \draw (4,0) rectangle (6,2);
    \draw (5,0)--(5,2) (4,1)--(6,1);
    \draw[path] (4.5,0.5)--(5.5,0.5)--(5.5,1.5);
    \end{tikzpicture}
\caption{A simple droop of BPD}
\label{fig:flip}
\end{figure}
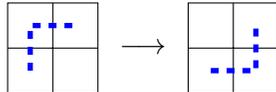
\end{proof}

Now we sum up all the BPDs of $w = w_2(2,n-2)$ to obtain 
\begin{equation}\label{equ}
\begin{aligned} \Phi_w  & = \sum_{D \in \BPD(w)} \wt(D) = \sum_{\substack{P=(P_1,P_2):S\rightarrow T\\ \text{non-intersecting}}} \delta(n) \cdot \wt(P_1)\wt(P_2)\\
& = \delta(n) \cdot \det\left(\begin{array}{cc}
\wt(S_1,T_1) & \wt(S_1,T_2)\\
\wt(S_2,T_1) & \wt(S_2,T_2)
\end{array}\right). \end{aligned}
\end{equation}

The second equality is obtained from \Cref{lemma:weight-equ}, and the last from \Cref{lemma:Lindstrom}. Therefore, $\Phi_w = \left|\S_w(1,-1,\ldots)\right|$ is exactly the absolute value of the determinant of $M_{S,T}$ as above. 

\begin{theorem}\label{lemma:matrix item}
    For double staircase partition $\Tilde{\rho}_n$, in its corresponding graph $\Tilde{G}_{\Tilde{\rho}_n}$, 
   \[M_{S,T} = \left(\begin{array}{cc}\wt(S_1,T_1) & \wt(S_1.T_2) \\ \wt(S_2,T_1) & \wt(S_2,T_2) \end{array}\right) = \begin{cases}
       \left(\begin{array}{cc}0 & -C_{k-1} \\ C_{k-1} & C_{k-1} \end{array}\right) & \text{ if } n = 2k, \\
       \left(\begin{array}{cc}C_k & * \\ 0 & -C_{k-1} \end{array}\right) & \text{ if } n = 2k+1.
   \end{cases}\]
\end{theorem}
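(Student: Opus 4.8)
The plan is to identify each matrix entry $\wt(S_a,T_b)$ as the weighted path-count in the graph $\Tilde G_{\Tilde\rho_n}$ and evaluate it by a sign-reversing/telescoping argument that collapses it to a (signed) Catalan number. Recall that in $\Tilde\rho_n$ the sources $S_1,S_2$ sit in the bottom two cells of columns $1,2$ and the sinks $T_1,T_2$ sit in the rightmost cells of the top two rows; vertical edges have weight $1$ and a horizontal edge in row $j$ has weight $(-1)^{j-1}$. So a path from $S_a$ to $T_b$ picks up a sign $(-1)^{\sum (j-1)}$, one term for each row $j$ in which it takes a horizontal step. The first step is to describe the weight of a single monotone lattice path as $(-1)^{e(P)}$ where $e(P)$ counts the horizontal steps taken in even-indexed rows, and to observe that because the double staircase has rows coming in equal pairs $n,n$ then $n-2,n-2$, etc., a path crossing a ``double row'' horizontally from column $c$ to column $c'$ contributes a horizontal step in both the odd and the even row of that pair, so those two contributions cancel unless the path changes column only in one of the two rows of the pair.

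Second, I would make the bijection with paths in the ordinary staircase $\rho$ precise. Collapsing each pair of equal rows of $\Tilde\rho_n$ to a single row turns $\Tilde\rho_n$ into (a shift of) the ordinary staircase $\rho_{n/2}$ (or $\rho_k$ with a tail, when $n=2k+1$), and a non-intersecting behavior of a single path there is counted, via \Cref{lemma:Lindstrom} applied to the single path or directly, by a Catalan number $C_{k-1}$ or $C_k$: indeed $\wt(S,T)$ for one source and one sink in $G_{\rho_m}$ with all weights $1$ is the number of monotone lattice paths staying inside the staircase, which is the Catalan number. The content is to check that the residual sign left after the pairwise cancellation above is constant over all surviving paths, so that $\wt(S_a,T_b)$ equals $\pm$(that Catalan number), and to pin down each of the four signs. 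For the even case: a path $S_1\to T_1$ must immediately conflict with parity (it is forced to make an odd number of ``unpaired'' horizontal steps, or more simply the set of such paths has a sign-reversing involution), giving the $0$ in the top-left; $S_2\to T_1$ and $S_1\to T_2$ and $S_2\to T_2$ each survive with a single global sign which I would compute on one explicit path (e.g. the staircase-hugging path) and get $C_{k-1}$, $-C_{k-1}$, $C_{k-1}$ respectively. The odd case is analogous: $S_2\to T_1$ now forces a parity obstruction (the $0$ in the bottom-left), $\wt(S_1,T_1)=C_k$, $\wt(S_2,T_2)=-C_{k-1}$, and the remaining entry $*$ is irrelevant since the determinant is $\mp C_{k-1}C_k$.

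The main obstacle is the vanishing entries and the sign bookkeeping: I need a clean reason why $\wt(S_1,T_1)=0$ when $n$ is even (and $\wt(S_2,T_1)=0$ when $n$ is odd), rather than a case check. The cleanest route is an explicit weight-preserving, sign-reversing involution on the set of monotone paths $S_1\to T_1$ inside $\Tilde\rho_n$: pair a path with the path obtained by toggling its horizontal step across the bottommost ``double row'' it ever crosses horizontally (swapping which of the two equal rows carries the column change), which is always possible because $S_1$ and $T_1$ lie in columns/rows of opposite parity within their pairs, changes the total sign by $-1$, and is an involution; hence the signed count is $0$. Once this involution is set up, the same idea shows the other three entries have no internal cancellation (the ``bottommost double row crossed'' is forced, so there is nothing left to toggle), and evaluating the surviving constant sign on one path finishes the computation. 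I would also double check the boundary rows (the single bottom row $2m$ in the odd case, here $2m=2$) do not spoil the pairing, since that is the one place the row-pairing structure degenerates.
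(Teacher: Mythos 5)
Your overall strategy --- tracking the parity of the column at which a path passes from the lower to the upper row of each equal-length pair of rows, and collapsing the double staircase to the ordinary one --- is sound, and it is in substance the paper's own argument (which encodes the same parity information as ``zero points'' and removes the cancelling paths by inclusion--exclusion). However, both of your key mechanisms have gaps as stated. First, the involution you propose for $\wt(S_1,T_1)=0$ is not sign-reversing. A path may split its horizontal displacement across a pair of equal rows arbitrarily, say $h$ steps in the odd row and $h'$ in the even row; ``swapping which row carries the column change'' must then mean exchanging $h\leftrightarrow h'$, which changes the weight $(-1)^{h'}$ to $(-1)^{h}$, i.e.\ reverses the sign only when $h+h'$ is odd, and is the identity map when $h=h'$. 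The bottommost pair that the path crosses horizontally may well have even total displacement, so your map has fixed points of nonzero weight. The repair is to toggle at the bottommost pair whose \emph{total} displacement $h+h'$ is odd; such a pair exists for $S_1\to T_1$ (the total displacement $n-1$ is odd) and is preserved by the swap, so the corrected map is a fixed-point-free sign-reversing involution.

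Second, and more seriously, your claim that the other three entries exhibit ``no internal cancellation,'' so that a single global sign times a path count finishes the computation, is false. Already for $n=4$ the three paths from $S_2$ to $T_1$ have weights $+1,-1,+1$, summing to $1=C_1$; a constant sign on all surviving paths would give $\pm 3$. After the (corrected) involution above, the survivors are the paths in which every pair has even displacement, but within such a pair the split point between the two rows still varies and the signs still alternate; one needs the further cancellation $\sum_{h'=0}^{2d}(-1)^{h'}=1$ inside each pair to reduce the signed sum to the number of nondecreasing column sequences bounded by the staircase, which is what actually produces $C_{k-1}$. This second cancellation step is precisely what the paper's inclusion--exclusion over zero points accomplishes (restricting to paths through odd rows and even columns, all of weight $1$), and it is missing from your argument. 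With these two repairs your proof goes through and coincides in substance with the paper's.
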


From \Cref{lemma:matrix item} and \Cref{equ}, \Cref{thm:Catalan for double layer} is proved. Next we prove \Cref{lemma:matrix item} to complete the proof of \Cref{thm:Catalan for double layer} .

\begin{defin}
    In graph $G_{\Tilde{\rho}}$, for each end point $T_i = (i,n)$, we define \emph{zero points} of $T_i$ as those vertices $(x,y)$ of $G_{\Tilde{\rho}}$ which satisfy $\sum_{P:(x,y) \rightarrow T_i} \wt(P) = 0$, where $P$ ranges over paths from $(x,y)$ to $T_i = (i,n)$. Denote by $Z(T_i)$  the set of zero points of $T_i$. 
\end{defin}

\begin{prop}\label{prop}
    In $G_\rho$, if $(i-2,j)$ and $(i,j+2)$ are two zero points of some end point $T$, then $(i,j)$ is also a zero point of $T$. 
\end{prop}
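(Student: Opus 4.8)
The plan is to reduce the statement to a single two-step recurrence for the relevant path sums and then expand it twice. For a vertex $(x,y)$ of $G_{\Tilde{\rho}}$ write
\[
f(x,y) \;:=\; \sum_{P:(x,y)\to T} \wt(P),
\]
so that $(x,y)\in Z(T)$ is the same as $f(x,y)=0$. The first step is to record the basic recurrence: a directed path leaving a non-terminal vertex $(x,y)$ begins either with the vertical edge to $(x-1,y)$, of weight $1$, or with the horizontal edge of row $x$ to $(x,y+1)$, of weight $(-1)^{x-1}$; splitting the sum according to this first edge gives
\[
f(x,y) \;=\; f(x-1,y) \;+\; (-1)^{x-1}\,f(x,y+1),
\]
valid whenever both $(x-1,y)$ and $(x,y+1)$ are vertices of $G_{\Tilde{\rho}}$, with $f(T)=1$ as the base case.

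The second step is a small amount of boundary bookkeeping. Because $(i,j+2)$ is assumed to be a vertex and the row lengths of $\Tilde{\rho}$ are weakly decreasing, rows $i-2$, $i-1$ and $i$ all have length at least $j+2$; hence $(i,j)$, $(i,j+1)$, $(i-1,j)$, $(i-1,j+1)$ and $(i-2,j)$ are all vertices of $G_{\Tilde{\rho}}$, and none of them is the terminal vertex $T=(\,\cdot\,,n)$, since each has column index at most $j+1\le n-1$. Thus the recurrence above is legitimately applicable at $(i,j)$, at $(i-1,j)$ and at $(i,j+1)$, with both terms present in each case.

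The third step is the computation itself. Expanding $f(i,j)$ with the recurrence, and then expanding each of the two terms produced,
\begin{align*}
f(i,j) &= f(i-1,j) + (-1)^{i-1}f(i,j+1)\\
&= \bigl(f(i-2,j)+(-1)^{i-2}f(i-1,j+1)\bigr) + (-1)^{i-1}\bigl(f(i-1,j+1)+(-1)^{i-1}f(i,j+2)\bigr).
\end{align*}
Substituting the hypotheses $f(i-2,j)=0$ and $f(i,j+2)=0$ leaves $f(i,j)=\bigl((-1)^{i-2}+(-1)^{i-1}\bigr)f(i-1,j+1)=0$, so $(i,j)\in Z(T)$, as desired.

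I do not expect a genuine obstacle here; the argument is short, and the only care needed is in the second step, namely checking that every vertex invoked really lies in $\Tilde{\rho}$ so that the two-term recurrence is exhaustive at each point of application. It is worth noting that the final cancellation $(-1)^{i-2}+(-1)^{i-1}=0$ is precisely the feature that the weighting of $G_{\Tilde{\rho}}$ — constant weight $1$ on vertical edges and opposite signs on the horizontal edges of consecutive rows — is designed to produce, which is why the ``gap $2$'' hypothesis relating the two given zero points is the natural one.
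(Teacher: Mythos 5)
Your proof is correct and takes essentially the same route as the paper: the paper groups paths from $(i,j)$ by the vertex reached after two steps and observes that the two two-step paths to $(i-1,j+1)$ carry opposite weights $(-1)^{i-2}$ and $(-1)^{i-1}$, which is exactly the cancellation your double application of the one-step recurrence produces. Your extra boundary bookkeeping (checking that all invoked vertices lie in the diagram, using that the row lengths of $\Tilde{\rho}$ weakly decrease) is a small point the paper leaves implicit.
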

\begin{proof}
All paths from $(i,j)$ to $T$ are divided into three types by the vertex reached after the first two steps: $(i-2,j)$, $(i-1,j+1)$, or $(i,j+2)$. Thus, \[\begin{aligned} \sum_{P:(i,j) \rightarrow T} \wt(P) = & \sum_{P:(i-2,j) \rightarrow T} \wt((i,j),(i-2,j))\wt(P)  \\ + & \sum_{P:(i-1,j+1) \rightarrow T} \wt((i,j),(i-1,j+1)) \wt(P) \\ + & \sum_{P:(i,j+2) \rightarrow T} \wt((i,j),(i,j+2)) \wt(P). \end{aligned}\]
Both the first and the third items in the right hand side are zero because $(i-2,j)$ and $(i,j+2)$ are zero points of $T$. The second item is zero since $\wt((i,j),(i-1,j+1)) = 0$. Summing up these items, we have $\sum_{P:(i,j) \rightarrow T} \wt(P) = 0$, i.e. $(i,j)$ is a zero point of $T$. 
\end{proof}

\begin{proof}[Proof of \Cref{lemma:matrix item}]
We only prove the case when $n = 2k$. The other case can be proved in the same way. 

Consider zero points of $T_1 = (1,2k)$. First, $(2,2k-1)$ is a zero point of $T_1$ because the only two paths from $(2,2k-1)$ to $(1,2k)$ have weights $1$ and $-1$, respectively. Similarly, $(2,2j-1)$ is also a zero point of $T_1$ for each $j = 1,\ldots,k$, since among all the $2(k+1-j)$ paths from $(2,2j-1)$ to $(1,2k)$, half have weight of $1$ and the other half have weight of $-1$. 

Next, $(4,2k-3)$ is a zero point. In fact, all paths from $(4,2k-3)$ to $T_1$ can be divided into two types by the vertex reached after the first two steps: $(2,2k-3)$ or $(3,2k-2)$. By the similar way of proving \Cref{prop}, we know that $(4,2k-3)$ is a zero points of $T_1$. Then using \Cref{prop} repeatedly, we obtain that $(4,2j-1)$ also belongs to $Z(T_1)$ for any $j = 1,\ldots,k-1$. 

Repeat the above procedure, we see that \[Z(T_1) = \{(i,j) \in G_\rho\ |\ i \equiv 0, j \equiv 1 \bmod 2\}.\]
See \Cref{subfig:zero t1} for illustration. 
Since $S_1 = (2k,1) \in Z(T_1)$, $\wt(S_1,T_1) = 0$. 
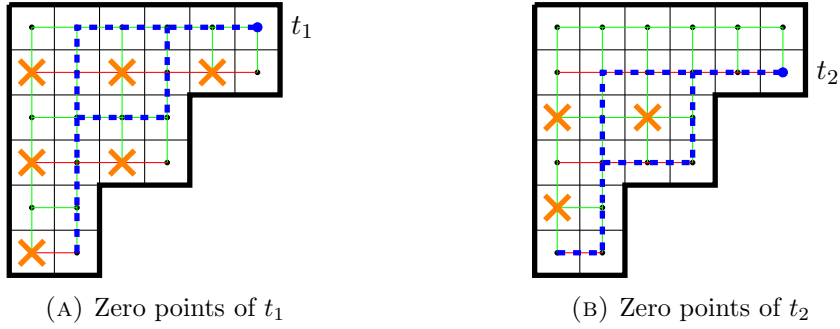
\begin{figure}
\begin{subfigure}{.45\textwidth}
    \centering
    \begin{tikzpicture}[scale = 0.6]
        \draw [line width = 2pt] (0,6)--(0,0)--(1,0)--(2,0)--(2,1)--(2,2)--(3,2)--(4,2)--(4,3)--(4,4)--(5,4)--(6,4)--(6,5)--(6,6)--(0,6);
        \draw (1,0)--(1,6);\draw (0,1)--(2,1);
        \draw (2,2)--(2,6);\draw (0,2)--(2,2);
        \draw (3,2)--(3,6);\draw (0,3)--(4,3);
        \draw (4,4)--(4,6);\draw (0,4)--(4,4);
        \draw (5,4)--(5,6);\draw (0,5)--(6,5);
        \foreach \i in {0.5,1.5,2.5,3.5,4.5,5.5} \filldraw (0.5,\i) circle (.05) (1.5,\i) circle (.05);
        \foreach \i in {2.5,3.5,4.5,5.5} \filldraw (2.5,\i) circle (.05) (3.5,\i) circle (.05);
        \foreach \i in {4.5,5.5} \filldraw (4.5,\i) circle (.05) (5.5,\i) circle (.05);
        \draw [color = green] (0.5,0.5)--(0.5,5.5) (1.5,0.5)--(1.5,5.5) (2.5,2.5)--(2.5,5.5) (3.5,2.5)--(3.5,5.5) (4.5,4.5)--(4.5,5.5) (5.5,4.5)--(5.5,5.5);
        \draw [color = green] (0.5,5.5)--(5.5,5.5) (0.5,3.5)--(3.5,3.5) (0.5,1.5)--(1.5,1.5);
        \draw [color = red] (0.5,4.5)--(5.5,4.5) (0.5,2.5)--(3.5,2.5) (0.5,0.5)--(1.5,0.5);
        \filldraw[color = blue] (5.5,5.5) circle (.1);
        \draw[path] (1.5,0.5)--(1.5,5.5)--(5.5,5.5);
        \draw[path] (1.5,3.5)--(3.5,3.5)--(3.5,5.5);
        \node (1) at (6.5,5.5) {$t_1$};
        \node [zero_node] at (4.5,4.5) {};
        \node [zero_node] at (2.5,4.5) {};
        \node [zero_node] at (0.5,4.5) {};
        \node [zero_node] at (2.5,2.5) {};
        \node [zero_node] at (0.5,2.5) {};
        \node [zero_node] at (0.5,0.5) {};
    \end{tikzpicture}
    \caption{Zero points of $t_1$}
    \label{subfig:zero t1}
\end{subfigure}
\begin{subfigure}{.45\textwidth}
    \centering
    \begin{tikzpicture}[scale = 0.6]
        \draw [line width = 2pt] (0,6)--(0,0)--(1,0)--(2,0)--(2,1)--(2,2)--(3,2)--(4,2)--(4,3)--(4,4)--(5,4)--(6,4)--(6,5)--(6,6)--(0,6);
        \draw (1,0)--(1,6);\draw (0,1)--(2,1);
        \draw (2,2)--(2,6);\draw (0,2)--(2,2);
        \draw (3,2)--(3,6);\draw (0,3)--(4,3);
        \draw (4,4)--(4,6);\draw (0,4)--(4,4);
        \draw (5,4)--(5,6);\draw (0,5)--(6,5);
        \foreach \i in {0.5,1.5,2.5,3.5,4.5,5.5} \filldraw (0.5,\i) circle (.05) (1.5,\i) circle (.05);
        \foreach \i in {2.5,3.5,4.5,5.5} \filldraw (2.5,\i) circle (.05) (3.5,\i) circle (.05);
        \foreach \i in {4.5,5.5} \filldraw (4.5,\i) circle (.05) (5.5,\i) circle (.05);
        \draw [color = green] (0.5,0.5)--(0.5,5.5) (1.5,0.5)--(1.5,5.5) (2.5,2.5)--(2.5,5.5) (3.5,2.5)--(3.5,5.5) (4.5,4.5)--(4.5,5.5) (5.5,4.5)--(5.5,5.5);
        \draw [color = green] (0.5,5.5)--(5.5,5.5) (0.5,3.5)--(3.5,3.5) (0.5,1.5)--(1.5,1.5);
        \draw [color = red] (0.5,4.5)--(5.5,4.5) (0.5,2.5)--(3.5,2.5) (0.5,0.5)--(1.5,0.5);
        \filldraw[color = blue] (5.5,4.5) circle (.1);
        \draw[path] (0.5,0.5)--(1.5,0.5)--(1.5,4.5)--(5.5,4.5);
        \draw[path] (1.5,2.5)--(3.5,2.5)--(3.5,4.5);
        \node (1) at (6.5,4.5) {$t_2$};
        \node [zero_node] at (2.5,3.5) {};
        \node [zero_node] at (0.5,3.5) {};
        \node [zero_node] at (0.5,1.5) {};
    \end{tikzpicture}
    \caption{Zero points of $t_2$}
    \label{subfig:zero t2}
\end{subfigure}
\caption{An example of zero points}
\label{fig:zero}
\end{figure}

Now we compute $\wt(S_2,T_1)$. All paths from $S_2$ to $T_1$ are divided into two types: paths without any zero points of $T_1$ and paths passing through some zero points. By an application of the inclusion-exclusion principle, we know that the latter type of paths have weights summed to zero. Thus, \[\wt(S_2,T_1) = \sum_{\substack{P:S_2 \rightarrow T_1 \\ Z(T_1) \cap P = \emptyset}} \wt(P).\]
Paths without zero points must go through the odd rows and even columns (see blue dashed lines in \Cref{subfig:zero t1}). These paths have the same weight of $1$ and the number of them equals the Catalan number $C_{k-1}$. Thus $\wt(S_2,T_1) = C_{k-1}$.

By a similar way, we also get \[Z(T_2) = \{(i,j) \in G_\rho \ |\  i \ge 3 \text{ and } i\equiv j\equiv 1\bmod 2\}.\]
See \Cref{subfig:zero t2}. Similar to the analysis above, both $\wt(S_1,T_2)$ and $\wt(S_2,T_2)$ equal exactly the weighted sum all over the paths going by those blue dashed lines shown in \Cref{subfig:zero t2}. Therefore, $\wt(S_1,T_2) = -C_{k-1}$ and $\wt(S_2,T_2) = C_{k-1}$.
\end{proof}

\subsection{Asymptotics of principal specializations for doubly layered permutations}
In this subsection, we prove \Cref{thm:asymptotic}. The strategy is to find relations between $\Phi_{w_2}$ for doubly layered permutations and $\Upsilon_w$ for layered permutations. Similar to the previous subsection, we convert the weighted sum of BPDs to the weighted sum of paths in corresponding graphs and represent the principal specializations by matrices. Surprisingly, the matrices of $\Phi_{w_2}$ and $\Upsilon_w$ have close connection.

We first review $\Upsilon_w$ for layered permutations. Keep notations as in \cite{MPP} and let \[F(m,p) := \Upsilon_{1^m \times w_0(p)},\] where $w_0(p) = (p,p-1,\ldots,1)$. A layered permutation $w(b_1,\ldots,b_k)$ equals $w_0(b_1)\times\cdots\times w_0(b_k)$.
By \Cref{equ:prop},
\[\begin{aligned}\Upsilon_{w(b_1,\ldots,b_k)} & = \Upsilon_{w(b_1,\ldots,b_{k-1})} \cdot F(b_1+\cdots+b_{k-1},b_k) \\
& = \Upsilon_{w(b_1,\ldots,b_{k-2})} \cdot F(b_1+\cdots+b_{k-2},b_{k-1}) \cdot F(b_1+\cdots+b_{k-1},b_k) \\
& = \cdots \\
& = \Upsilon_{w_0(b_1)} \cdot F(b_1,b_2) \cdot F(b_1+b_2,b_3) \cdot \cdots \cdot F(b_1+\cdots+b_{k-1},b_k) \\
& = F(b_1,b_2) \cdot F(b_1+b_2,b_3) \cdot \cdots \cdot F(b_1+\cdots+b_{k-1},b_k). 
\end{aligned}\]
From \Cref{thm:BPD wt}, $F(m,p)$ is the weighted sum of bumpless pipe dreams. A BPD $D$ of $w = 1^m \times w_0(p)$ has fixed pipe $i$ for any $i > m$. The first $m$ pipes of $D$ are pairwise non-intersecting in the partition $\rho_{m+p}^m$ (see \Cref{def:gen stair}). Therefore, each BPD corresponds to a non-intersecting $m$-paths in graph $G_{\rho_{m+p}^m}$ (see \Cref{def:stair graph}). By \Cref{lemma:Lindstrom}, 
\[F(m,p) = \det\left(\begin{array}{cccc}
\wt(s_1, t_1) & \wt(s_1, t_2) & \cdots & \wt(s_1, t_m) \\
\wt(s_2, t_1) & \wt(s_2, t_2) & \cdots & \wt(s_2, t_m) \\
\vdots & \vdots & \ddots & \vdots \\
\wt(s_m, t_1) & \wt(s_m, t_2) & \cdots & \wt(s_m, t_m)
\end{array}\right) =: \det\left(M_{ij}\right)_{m \times m},\] where $s_i,t_j \in G_{\rho^m_{m+p}}$ and $M_{ij} := \wt(s_i,t_j)$ represents the weighted sum all over paths from $s_i$ to $t_j$ in $G_{\rho^m_{m+p}}$. See \Cref{subfig:F} for an example.

Next, we consider $\Phi_w$ for doubly layered permutations. Define $\Tilde{w}_0(n)$ as \[\Tilde{w}_0(n) := (n-1,n,n-3,n-2,\ldots),\] the unique doubly layered permutation with $1$ layer. We always have $\Phi_{\Tilde{w}_0(n)} = 1$ since $S_{\Tilde{w}_0(n)} = x_1^{n-2}x_2^{n-2}x_3^{n-4}x_4^{n-4}\cdots$ by definition.

\begin{defin}
Similar to the notation of $F(m,p)$ as above, we define
\[\Tilde{F}(2m,2p+r) := \Phi_{1^{2m} \times \Tilde{w}_0(2p+r)}.\]
\end{defin}

A doubly layered permutation $w_2(2b_1,\ldots,2b_k+r)$ is $\Tilde{w}_0(2b_1) \times \cdots \times \Tilde{w}_0(2b_{k-1}) \times \Tilde{w}_0(2b_k+r)$. By \Cref{equ:prop}, 
\[\Phi_{w_2(2b_1,\ldots,2b_k+r)} = \Tilde{F}(2b_1,2b_2) \cdot \Tilde{F}(2b_1+2b_2,2b_3) \cdot \ \cdots \ \cdot \Tilde{F}(2b_1+\cdots + 2b_{k-1},2b_k+r).\]

Again from \Cref{thm:BPD wt}, $\Tilde{F}(2m,2p+r)$ is the weighted sum of bumpless pipe dreams. Same as proving \Cref{lemma:weight-equ}, as the weight setting of graph $\Tilde{G}$ (see \Cref{def:double stair graph}) matches that of BPDs under $\Phi_w$, we know that the weighted sum of BPDs and the weighted sum of non-intersecting $2m$-paths are either the same or opposite numbers of each other. Thus, $\Tilde{F}(2m,2p+r)$ equals, by \Cref{lemma:Lindstrom}, the absolute value of the matrix determinant as follows.
\[\Tilde{F}(2m,2p+r) = \det\left(\begin{array}{cccc}
\wt(S_1, T_1) & \wt(S_1, T_2) & \cdots & \wt(S_1, T_{2m}) \\
\wt(S_2, T_1) & \wt(S_2, T_2) & \cdots & \wt(S_2, T_{2m}) \\
\vdots & \vdots & \ddots & \vdots \\
\wt(S_{2m}, T_1) & \wt(S_{2m}, T_2) & \cdots & \wt(S_{2m}, T_{2m})
\end{array}\right) =: \det \left(\Tilde{M}_{ij}\right)_{2m \times 2m},\]
where $S_i,T_j \in \Tilde{G}_{\Tilde{\rho}}$, $\Tilde{\rho} = \Tilde{\rho}_{2m+2p+r}^{2m}$ and $\Tilde{M}_{ij} := \wt(S_i,T_j)$ represents the weighted sum all over paths from $S_i$ to $T_j$ in $\Tilde{G}_{\Tilde{\rho}}$. See \Cref{subfig:Tilde{F}}.

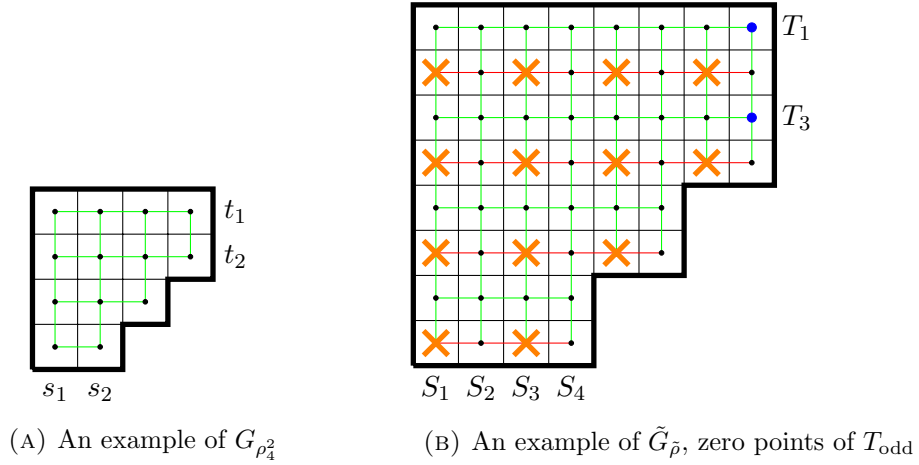
\begin{figure}[h]
    \begin{subfigure}{.45\textwidth}
    \centering
    \begin{tikzpicture}[scale = 0.6]
    \draw[line width = 2pt] (0,0)--(2,0)--(2,1)--(3,1)--(3,2)--(4,2)--(4,4)--(0,4)--(0,0);
    \draw (1,0)--(1,4) (2,1)--(2,4) (3,2)--(3,4) (0,3)--(4,3) (0,2)--(3,2) (0,1)--(2,1);
    \node at (0.5,-0.5) {$s_1$};\node at (4.5,3.5) {$t_1$};
    \node at (1.5,-0.5) {$s_2$};\node at (4.5,2.5) {$t_2$};
    \foreach \i in {0.5,1.5} \draw[color = green] (\i,0.5)--(\i,3.5);
    \foreach \i in {3.5,2.5} \draw[color = green] (0.5,\i)--(3.5,\i);
    \draw[color = green] (2.5,1.5)--(2.5,3.5) (3.5,2.5)--(3.5,3.5);
    \draw[color = green] (0.5,1.5)--(2.5,1.5) (0.5,0.5)--(1.5,0.5);
    \foreach \i in {0.5,1.5,2.5,3.5} \filldraw (0.5,\i) circle (.05) (1.5,\i) circle (.05);
    \filldraw (2.5,1.5) circle (.05) (2.5,2.5) circle (.05) (2.5,3.5) circle (.05) (3.5,2.5) circle (.05) (3.5,3.5) circle (.05);
    \end{tikzpicture}
    \caption{An example of $G_{\rho_4^2}$}
    \label{subfig:F}
\end{subfigure}
\begin{subfigure}{.45\textwidth}
    \begin{tikzpicture}[scale = 0.6]
    \draw[line width = 2pt] (0,0)--(4,0)--(4,2)--(6,2)--(6,4)--(8,4)--(8,8)--(0,8)--(0,0);
    \foreach \i in {1,2,3} \draw (\i,0)--(\i,8);
    \draw (4,2)--(4,8) (5,2)--(5,8);
    \draw (6,4)--(6,8) (7,4)--(7,8);
    \foreach \i in {5,6,7} \draw (0,\i)--(8,\i);
    \draw (0,4)--(6,4) (0,3)--(6,3);
    \draw (0,2)--(4,2) (0,1)--(4,1);
    \node at (8.5,7.5) {$T_1$};
    \node at (8.5,5.5) {$T_3$};
    \node at (0.5,-0.5) {$S_1$};
    \node at (1.5,-0.5) {$S_2$};
    \node at (2.5,-0.5) {$S_3$};
    \node at (3.5,-0.5) {$S_4$};
    \foreach \i in {0.5,1.5,2.5,3.5} \draw[color = green] (\i,0.5)--(\i,7.5);
    \foreach \i in {7.5,5.5} \draw[color = green] (0.5,\i)--(7.5,\i);
    \draw[color = green] (4.5,2.5)--(4.5,7.5)(5.5,2.5)--(5.5,7.5)(6.5,4.5)--(6.5,7.5)(7.5,4.5)--(7.5,7.5);
    \draw[color = green] (0.5,3.5)--(5.5,3.5) (0.5,1.5)--(3.5,1.5);
    \draw[color = red] (0.5,6.5)--(7.5,6.5) (0.5,4.5)--(7.5,4.5) (0.5,2.5)--(5.5,2.5) (0.5,0.5)--(3.5,0.5);
    \foreach \i in {0.5,1.5,2.5,3.5,4.5,5.5,6.5,7.5} \filldraw (0.5,\i) circle (.05) (1.5,\i) circle (.05) (2.5,\i) circle (.05) (3.5,\i) circle (.05);
    \foreach \i in {2.5,3.5,4.5,5.5,6.5,7.5} \filldraw (4.5,\i) circle (.05) (5.5,\i) circle (.05);
    \foreach \i in {4.5,5.5,6.5,7.5} \filldraw (6.5,\i) circle (.05) (7.5,\i) circle (.05);
    \filldraw[color = blue] (7.5,7.5) circle (.1) (7.5,5.5) circle (.1);
    \foreach \i in {6.5,4.5} \node[zero_node] at (6.5,\i) {};
    \foreach \i in {6.5,4.5,2.5} \node[zero_node] at (4.5,\i) {};
    \foreach \i in {6.5,4.5,2.5,0.5} \node[zero_node] at (2.5,\i) {};
    \foreach \i in {6.5,4.5,2.5,0.5} \node[zero_node] at (0.5,\i) {};
    \end{tikzpicture}
    \caption{An example of $\Tilde{G}_{\Tilde{\rho}}$, zero points of $T_{\text{odd}}$}
    \label{subfig:Tilde{F}}
\end{subfigure}
\caption{Examples for $G_{\rho}$ and $\Tilde{G}_{\Tilde{\rho}}$}
\label{fig:F&Tilde{F}}
\end{figure}

\begin{theorem}\label{thm:F}
    $\Tilde{F}(2m,2p+r) = \begin{cases}F(m,p)^2 & \text{ if } r = 0, \\ F(m,p)\cdot F(m,p+1) & \text{ if } r = 1. \end{cases}$
\end{theorem}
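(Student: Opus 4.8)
The plan is to stay on the level of Lindström--Gessel--Viennot matrices and never count paths directly. We already know $\Tilde{F}(2m,2p+r)=\lvert\det(\Tilde{M}_{ij})\rvert$ with $(\Tilde{M}_{ij})$ of size $2m\times2m$, while $F(m,p)=\det(M_{ij})$ with $(M_{ij})$ the $m\times m$ staircase matrix for $G_{\rho^m_{m+p}}$. First I would reorder sources and sinks so that $(\Tilde{M}_{ij})$ becomes a $2\times2$ block matrix, with the odd-indexed $S_1,S_3,\dots,T_1,T_3,\dots$ in one class and the even-indexed ones in the other. The aim is to show that after this reordering one off-diagonal block vanishes identically and the two complementary blocks are each, up to sign, a staircase matrix of the right size.

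Next I would run the zero-point analysis of \Cref{lemma:matrix item} for every sink. The two-row ``opposite weight'' base case shows $(j+1,c)$ is a zero point of $T_j$ whenever $c$ has parity opposite to $n$: the paths from $(j+1,c)$ to $(j,n)$ are indexed by the column where their unique up-step occurs, they live in the two consecutive rows $j$ and $j+1$ whose horizontal edges carry opposite signs, so their weights alternate in sign and sum to $0$ exactly when $n-c$ is odd. \Cref{prop} propagates this downward and leftward, and --- just as in \Cref{lemma:matrix item}, where the reentrant corners of $\Tilde{\rho}$ clip off precisely the would-be interior zero points --- one obtains $Z(T_j)=\{(i,\ell)\in\Tilde{\rho}^{2m}_{2m+2p+r}: i\not\equiv j,\ i>j,\ \ell\not\equiv n \pmod{2}\}$. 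Inclusion--exclusion over zero points then gives $\Tilde{M}_{ij}=\sum_{P\colon S_i\to T_j,\ P\cap Z(T_j)=\emptyset}\wt(P)$. Since every edge points up or right, a path reaching $T_j=(j,n)$ never rises above row $j$, so all of its horizontal steps lie in rows of one fixed parity, and --- their total number being $n-i$ --- all surviving paths carry the same sign $\pm1$, depending only on $i$ and $j$.

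Now I would read off the block structure. When $r=0$ (so $n$ is even) a source $S_i=(n,i)$ with $i$ odd already lies in $Z(T_j)$ for every odd $j$ (its row $n$ is even, its column $i$ is odd, and $n>j$); hence the (odd-$S$, odd-$T$) block vanishes and $\det\Tilde{M}$ is, up to sign, the product of the two anti-diagonal blocks. In the (even-$S$, odd-$T$) block the surviving paths make their horizontal runs in odd rows and their vertical runs in even columns, so passing to the sub-lattice on those rows and columns of $\Tilde{\rho}^{2m}_{2m+2p}$ identifies it with $G_{\rho^m_{m+p}}$ and sends the endpoints $S_2,S_4,\dots\mapsto s_1,\dots,s_m$ and $T_1,T_3,\dots\mapsto t_1,\dots,t_m$ in order; all surviving weights being $+1$, this block is exactly $(M_{kl})$. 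The (odd-$S$, even-$T$) block is handled the same way and equals a matrix $\pm(M_{kl})$ (its surviving horizontal steps now fall in even rows, contributing one uniform sign). Multiplying, $\det\Tilde{M}=\pm F(m,p)^2$. When $r=1$ the bottom row of $\Tilde{\rho}^{2m}_{2m+2p+1}$ is short, which forces every path out of an even-indexed source onto a zero point of every odd $T_j$; so now the (even-$S$, odd-$T$) block vanishes and $\det\Tilde{M}$ is $\pm$ the product of the two diagonal blocks. The (even-$S$, even-$T$) block reduces to $G_{\rho^m_{m+p}}$ as before, while the (odd-$S$, odd-$T$) block --- whose sub-lattice now gains one extra row from the short bottom row --- reduces to $G_{\rho^m_{m+p+1}}$, contributing $F(m,p+1)$. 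Hence $\det\Tilde{M}=\pm F(m,p)F(m,p+1)$. In both cases $\Tilde{F}=\lvert\det\Tilde{M}\rvert$ is the claimed product, the leftover signs being irrelevant because $F$ and $\Tilde{F}$ are nonnegative.

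The step I expect to be hardest is the bookkeeping near the reentrant boundary of the double staircase: establishing exactly which block vanishes --- in particular, for $r=1$, that the recursion really drives every relevant path onto a zero point and not merely the obvious ones --- and showing the complementary block retains precisely the zero-point-free paths needed to rebuild the full staircase matrix, with source/sink labels in the sorted order that makes the Lindström--Gessel--Viennot sign trivial. Pinning down that the staircase in the $r=1$ odd block has size $m+p+1$ rather than $m+p$, and that each block's overall sign is genuinely constant once the shape is truncated, are the other delicate points.
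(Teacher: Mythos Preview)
Your treatment of the case $r=0$ is correct and essentially coincides with the paper's: both use the zero-point analysis to show $\Tilde M_{2i-1,2j-1}=0$, identify the (even-$S$, odd-$T$) block with $M$ and the (odd-$S$, even-$T$) block with $-M$, whence $\lvert\det\Tilde M\rvert=\det(M)^2$. (The paper performs one extra row operation to make the matrix block-diagonal rather than block-antitriangular, but this is cosmetic.)

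The case $r=1$, however, has a genuine gap. Your claim that the (even-$S$, odd-$T$) block vanishes is false for $m\ge 2$. Your zero-point description for $T_{2j-1}$ is correct (even rows, even columns, since $n$ is odd), but the source $S_{2i}=(n,2i)$ sits in the \emph{odd} bottom row and is not itself a zero point; for $i<m$ the path $S_{2i}\to(n,2i+1)\to(n-1,2i+1)\to\cdots$ steps sideways in row $n$ before climbing and thereby avoids every zero point of $T_{2j-1}$. Only for $i=m$ is the source trapped, since $S_{2m}$ has no room to go right and must step up into the zero point $(n-1,2m)$. Concretely, the paper computes $\Tilde M_{2i,2j-1}=M'_{i+1,j}$ (with the convention $M'_{m+1,j}:=0$), where $M'$ is the Lindstr\"om matrix for $F(m,p+1)$; this is nonzero for $i<m$. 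Your identification of the (even-$S$, even-$T$) block with $\pm M$ is likewise incorrect: the paper finds $\Tilde M_{2i,2j}=-M_{m,j}$, a rank-one matrix constant in $i$. The underlying reason is the asymmetric bottom step of $\Tilde\rho^{2m}_{2m+2p+1}$ (row $n$ has $2m$ columns while rows $n-1$ and $n-2$ have $2m+1$), which spoils at the last odd row the cancellation your formula would predict for even sinks.

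Consequently, after mere reordering $\Tilde M$ is not block-triangular and the determinant does not factor. The paper's remedy is a sequence of elementary row operations beyond reordering: subtract the last row from every other row, then subtract row $2i$ from row $2i+1$ for $1\le i\le m-1$, and only then permute to obtain $\left(\begin{smallmatrix}M'&*\\0&M\end{smallmatrix}\right)$. Your plan for $r=1$ would need to incorporate such operations; the ``short bottom row'' heuristic does not by itself produce a vanishing block.
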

It is a generalization of \Cref{thm:Catalan for double layer}. In particular, when $m = 1$, \Cref{thm:F} becomes \Cref{thm:Catalan for double layer}. 

\begin{proof}
     Case $r=0$. As the notation above, denote by $M = \left(M_{ij}\right)$ the matrix of $F(m,p)$ and by $\Tilde{M} = \left(\Tilde{M}_{ij}\right)$ the matrix of $\Tilde{F}(2m,2p)$. We will write $\Tilde{M}_{ij}$'s in terms of $M_{ij}$'s. 
    Consider all the zero points of each end point $T$. Recall
    \[\begin{aligned}Z(T_{2k-1}) &= \{(i,j) \in \Tilde{G} \ |\ i > 2k-1 \text{ and } i \equiv 0, j \equiv 1 \bmod 2\},\\ Z(T_{2k}) &= \{(i,j) \in \Tilde{G} \ |\ i >2k \text{ and } i \equiv j \equiv 1 \bmod 2\}, \end{aligned}\]
    for every $1 \le k \le m$. See \Cref{subfig:Tilde{F}} for an example of $Z(T_{\text{odd}})$. Moreover, by the inclusion-exclusion principle, we know that the weighted sum $\wt(S_i,T_j)$ over the paths from $S_i$ to $T_j$ equals those without passing through any points in $Z(T_j)$. That is 
    \[\wt(S_i,T_j) = \sum_{\substack{P:S_i \rightarrow T_j \\ Z(T_j) \cap P = \emptyset}}\wt(P).\]
    Since $S_{2i-1} = (2m+2p,2i-1) \in Z(T_{2j-1})$, we have $\Tilde{M}_{2i-1,2j-1} = \wt(S_{2i-1},T_{2j-1}) = 0$. For $\Tilde{M}_{2i,2j-1}$, banning all the rows and columns which intersect with zero points $Z(T_{2j-1})$, we are left with odd rows and even columns, which form exactly the graph $G = G_{\rho_{m+p}^m}$. Thus, $\Tilde{M}_{2i,2j-1} = \wt(S_{2i},T_{2j-1}) = M_{ij}$. 
    Similarly, for $\Tilde{M}_{2i-1,2j}$ and $\Tilde{M}_{2i,2j}$, we only need to consider the rows and columns avoiding any zero points $Z(2j)$, which are all the even rows and even columns. They also form the same shape as the graph $G_{\rho_{m+p}^m}$, with edges in rows having weight $-1$ instead of $1$. Thus,
    \[\begin{aligned}\Tilde{M}_{2i-1,2j} &= \wt(S_{2i-1},T_{2j}) = (-1)^{2m+2p-2i+1} \cdot M_{ij} = -M_{ij},\\
    \Tilde{M}_{2i,2j} &= \wt(S_{2i},T_{2j}) = (-1)^{2m+2p-2i} \cdot M_{ij} = M_{ij} \end{aligned}.\]
    Therefore, for all $1 \le i,j \le m$, we obtain
    \[\left(\begin{array}{cc}\Tilde{M}_{2i{-}1,2j{-}1} & \Tilde{M}_{2i{-}1,2j}\\ \Tilde{M}_{2i,2j{-}1} & \Tilde{M}_{2i,2j}\end{array}\right) = \left(\begin{array}{cc} 0 & -M_{ij}\\ M_{ij} & M_{ij}\end{array}\right).\]
    
    Now perform elementary operations on matrix $\Tilde{M}$. First, add row $2i-1$ to row $2i$ for every $1 \le i \le m$. Next, move all the even rows to the top and move all the odd columns to the left. Then, matrix $\Tilde{M}$ becomes $\left(\begin{array}{cc} M &0 \\ 0 &-M \end{array}\right)$. 
    Therefore,
    \[\Tilde{F}(2m,2p) = \left|\det(\Tilde{M})\right| = \left|\det\left(M\right)\right| \cdot \left|\det(M)\right| = F(m,p)^2. \]
    
    Case $r=1$. The proof is similar as above, but the details are more complicated. Denote by $M = \left(M_{ij}\right)$, $M^{\prime} = \left(M^{\prime}_{ij}\right)$ and $\Tilde{M} = \left(\Tilde{M}_{ij}\right)$ the matrix of $F(m,p)$, $F(m,p+1)$ and $\Tilde{F}(2m,2p+1)$, respectively. We also use the zero points to simplify $\Tilde{G}$ so that we can use items $M_{ij}$ and $M^{\prime}_{ij}$ to represent $\Tilde{M}$. Finally, for $2 \le i \le m$, $1 \le j \le m$,
    \[\left(\begin{array}{cc}\Tilde{M}_{2i{-}1,2j{-}1} & \Tilde{M}_{2i{-}1,2j}\\ \Tilde{M}_{2i,2j{-}1} & \Tilde{M}_{2i,2j}\end{array}\right) = \left(\begin{array}{cc} M^{\prime}_{i,j} & M_{i-1,j}{-}M_{m,j}\\ M^{\prime}_{i+1,j} & {-}M_{m,j}\end{array}\right),\]
    where we define $M^{\prime}_{m+1,j} = 0$.
    In particular, when $i = 1$, we can only deduce that $\left(\begin{array}{cc}\Tilde{M}_{1,2j{-}1} & \Tilde{M}_{1,2j}\\ \Tilde{M}_{2,2j{-}1} & \Tilde{M}_{2,2j} \end{array}\right) = \left(\begin{array}{cc} M^{\prime}_{1,j} & *\\ M^{\prime}_{2,j} & {-}M_{m,j}\end{array}\right)$ for $1 \le j \le m$. Though $\Tilde{M}_{1,2j}$ is hard to determine, we do not need its exact value.

    Now perform elementary operations on matrix $\Tilde{M}$. First, subtract the last row from each row except itself. Next, subtract row $2i$ from row $2i+1$ for every $1 \le i \le m-1$. Finally, move rows $1,2,4,\ldots,2m-2$ to the top and move all the odd columns to the left. Then the matrix $\Tilde{M}$ becomes
    $\left(\begin{array}{cc} M^{\prime} & * \\ 0 & M \end{array}\right)$.
    Therefore,
    \[\Tilde{F}(2m,2p+1) = \left|\det(\Tilde{M})\right| = \left|\det(M^{\prime})\right| \cdot \left|\det(M)\right| = F(m,p+1) \cdot F(m,p). \]
\end{proof}

\begin{cor}\label{cor:doubly}
For a doubly layered permutation $w_2(2b_1,\ldots,2b_k+r)$,
\[\Phi_{w_2(2b_1,\ldots,2b_k+r)} = \begin{cases} (\Upsilon_{w(b_1,\ldots,b_k)})^2 & \text{ if } r = 0, \\ \Upsilon_{w(b_1,\ldots,b_k)}\cdot \Upsilon_{w(b_1,\ldots,b_k+1)} & \text{ if } r = 1. \end{cases}\]
\end{cor}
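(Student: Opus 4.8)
The plan is to read \Cref{cor:doubly} off from \Cref{thm:F} together with the two product expansions derived above from \eqref{equ:prop}. Writing $\beta_i := b_1 + \cdots + b_i$, recall that
\[\Upsilon_{w(b_1,\ldots,b_k)} = \prod_{i=1}^{k-1} F(\beta_i, b_{i+1}), \qquad \Phi_{w_2(2b_1,\ldots,2b_k+r)} = \left(\prod_{i=1}^{k-2} \Tilde{F}(2\beta_i, 2b_{i+1})\right) \Tilde{F}(2\beta_{k-1}, 2b_k + r).\]
First I would apply \Cref{thm:F} to each factor on the right: $\Tilde{F}(2\beta_i, 2b_{i+1}) = F(\beta_i, b_{i+1})^2$ for every $i \le k-2$, while the last factor $\Tilde{F}(2\beta_{k-1}, 2b_k + r)$ is handled according to the parity of $r$.

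For $r = 0$ every factor becomes a perfect square, so the product is $\left(\prod_{i=1}^{k-1} F(\beta_i, b_{i+1})\right)^2 = \Upsilon_{w(b_1,\ldots,b_k)}^2$, which is precisely the claimed identity.

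For $r = 1$, \Cref{thm:F} gives $\Tilde{F}(2\beta_{k-1}, 2b_k + 1) = F(\beta_{k-1}, b_k)\, F(\beta_{k-1}, b_k + 1)$, hence
\[\Phi_{w_2(2b_1,\ldots,2b_k+1)} = \left(\prod_{i=1}^{k-2} F(\beta_i, b_{i+1})\right)^{2} F(\beta_{k-1}, b_k)\, F(\beta_{k-1}, b_k+1).\]
I would then identify the right-hand side with $\Upsilon_{w(b_1,\ldots,b_k)} \cdot \Upsilon_{w(b_1,\ldots,b_{k-1},b_k+1)}$: the layers $(b_1,\ldots,b_{k-1},b_k+1)$ have partial sums $\beta_1,\ldots,\beta_{k-1},\beta_{k-1}+b_k+1$, so that $\Upsilon_{w(b_1,\ldots,b_{k-1},b_k+1)} = \left(\prod_{i=1}^{k-2} F(\beta_i,b_{i+1})\right) F(\beta_{k-1}, b_k+1)$, and multiplying this by $\Upsilon_{w(b_1,\ldots,b_k)} = \left(\prod_{i=1}^{k-2} F(\beta_i,b_{i+1})\right) F(\beta_{k-1},b_k)$ reproduces exactly the displayed expression.

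Finally I would dispose of the degenerate case $k=1$, in which the products above are empty and both sides equal $1$ since $\Phi_{\Tilde{w}_0(n)} = 1$ and $\Upsilon_{w_0(b_1)} = 1$; the case $k=2$ is covered by the generic argument once empty products are read as $1$. I do not expect any genuine obstacle here — all of the combinatorial and linear-algebraic content resides in \Cref{thm:F} — so the only point requiring attention is the index bookkeeping and the regrouping of the telescoping product, which the computation above makes explicit.
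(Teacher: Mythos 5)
Your proposal is correct and follows essentially the same route as the paper: factor $\Phi_{w_2}$ into a product of $\Tilde{F}$'s via the multiplicativity coming from \eqref{equ:prop}, apply \Cref{thm:F} to each factor, and regroup the resulting product into $\Upsilon_{w(b_1,\ldots,b_k)}\cdot\Upsilon_{w(b_1,\ldots,b_k+r)}$. The explicit index bookkeeping with the partial sums $\beta_i$ and the remark on the degenerate case $k=1$ are fine additions but do not change the argument.
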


\begin{proof}
Applying \Cref{thm:F} repeatedly, we can represent the principal specializations $\Phi_w$ for every doubly layered permutations $w$ by $\Upsilon_w$ for some layered permutations. As a matter of fact, for each $w = w_2(2b_1,\ldots,2b_k+r)$,
\[\begin{aligned}
    \Phi_w = & \Tilde{F}(2b_1,2b_2) \cdot \cdots \cdot \Tilde{F}(2b_1+\cdots+2b_{k-1},2b_k+r) \\
    = & F(b_1,b_2)^2 \cdot \cdots \cdot F(b_1+\cdot+b_{k-2},b_{k-1})^2 \\ & \cdot F(b_1+\cdots+b_{k-1},b_k) \cdot F(b_1+\cdots+b_{k-1},b_k+r) \\
    = & \Upsilon_{w(b_1,\ldots,b_{k-1},b_k)} \cdot \Upsilon_{w(b_1,\ldots,b_{k-1},b_k+r)}.
\end{aligned}\]
\end{proof}

Now we are ready to prove one of our main results.
\begin{proof}[Proof of \Cref{thm:asymptotic}]
By \Cref{cor:doubly}, 
\[\Tilde{v}(n) = \begin{cases}v(k)^2 & \text{ if } n = 2k, \\ v(k)v(k+1) & \text{ if } n = 2k+1. \end{cases}\]
Therefore, there is a limit
\[\lim_{n \to \infty} \frac{\log_2 \Tilde{v}(n)}{n^2} = \lim_{k \to \infty} \frac{\log_2 v(k)^2}{4k^2} = \frac{1}{2}\lim_{k \to \infty} \frac{\log_2 v(k)}{k^2} = \approx 0.1466.\]
\end{proof}

\section{Multi-layered Permutations}\label{sec:multi}
Results of principal specialization $\Phi_w$ for doubly layered permutations in the previous section can be generalized to $\S_w(1,q,q^2,\ldots)$ at roots of unity for multi-layered permutations. For the sake of a clean exposition, we choose to deal with the case of $q=-1$ in details in the previous section and outline the necessary crucial steps in the current section for multi-layered permutations. 

To be specific, fix any $k$-th root of unity $\zeta=e^{\frac{2\pi j}{k}}$ where $\mathrm{gcd}(j,k)=1$ and consider the principal specialization 
\[\Phi^k_w :=\left|\S_w(1,q,q^2,\ldots)\right|_{q=\zeta}.\] 
Define multi-layered permutations as follows.

\begin{defin}
    A \emph{$k$-multi-layered permutation} in $S_n$ is defined as
    \[\begin{aligned}w_k(kb_1,\ldots,kb_t+r) := ( & \underbrace{kb_1-k+1, \ldots,kb_1,\ldots,1, \ldots, k}_{\text{ the first } k{-}\text{multi layer}}, \\ & \underbrace{kb_1+kb_2-k+1,\ldots,kb_1+kb_2,\ldots,kb_1+1,\ldots,kb_1+k}_{\text{ the second } k{-}\text{multi layer}}, \\ & \ldots, \\ &\underbrace{n-k+1,\ldots,n,\ldots,n-kb_t-r+1,\ldots,n-kb_t}_{\text{ the } t{-}\text{th } k{-}\text{multi layer}}),\end{aligned}\]
    where $n = kb_1+ \cdots +kb_t+r$ and r is the remainder of $n \bmod k$. In particular, when $k = 1$ and $2$, it becomes layered permutation and doubly layered permutation, respectively. 
\end{defin}

Denote by $k\mathcal{L}_n$ the set of $k$-multi-layered permutations in $S_n$. Let $v_k(n)$ be the largest principal specializations at $q=\zeta$ in $k\mathcal{L}_n$ \[v_k(n) := \max_{w \in k\mathcal{L}_n} \Phi^k_w.\] Analogous to \Cref{thm:Catalan for double layer}, we have the following theorem for multi-layered permutations.

\begin{theorem}\label{thm:multi-Catalan}
    Given $n = kn_0+r$ and $w = w_k(k,k(n_0-1)+r)$ with $0\leq r<k$,
    \[\Phi^k_w = (C_{n_0-1})^{k-r}\cdot (C_{n_0})^r,\]
    where $C_n$ represents the $n$-th Catalan number.
\end{theorem}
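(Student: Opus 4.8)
The plan is to mimic the argument for \Cref{thm:Catalan for double layer}, i.e.\ the $k=2$, $t=2$, one-nontrivial-layer case, replacing the double staircase by a $k$-fold staircase and the sign pattern $(-1)^{j-1}$ on horizontal edges by the pattern $\zeta^{j-1}$ (more precisely, the horizontal edge on row $j$ should carry the weight that matches how $\S_w(1,q,q^2,\ldots)$ assigns $q^{i-1}$ to a box in row $i$ under the BPD weight of \Cref{thm:BPD wt}, specialized at $q=\zeta$). First I would introduce the \emph{$k$-fold staircase partition} $\Tilde{\rho}_n^{(k)}$ obtained by stacking $k$ copies of each step, so that for $w = w_k(k, k(n_0-1)+r)$ the pipes $k{+}1, k{+}2, \ldots$ are forced and the first $k$ pipes of any $\BPD$ live inside this region. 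As in \Cref{subsec:bumpless}, a $\BPD$ of $w$ then corresponds to a non-intersecting $k$-path in the weighted lattice graph $\Tilde{G}^{(k)}$ built on $\Tilde{\rho}_n^{(k)}$, and the analogue of \Cref{lemma:weight-equ} — that $\wt(D) = \delta_k(n)\cdot \wt(P)$ for a global root-of-unity constant $\delta_k(n)$ depending only on $n$ and $k$ — follows by the same droop argument, since a simple droop multiplies the weight by a fixed root of unity and the Rothe BPD $D_0$ can be computed directly.

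Next I would compute the $k\times k$ matrix $M_{S,T} = (\wt(S_i,T_j))$ via \Cref{lemma:Lindstrom}, exactly as in \Cref{lemma:matrix item} and its proof: identify the \emph{zero points} of each endpoint $T_i$. The key structural fact is the $k$-term generalization of \Cref{prop}: if $(i{-}k,j), (i{-}k{+}1,j{+}1), \ldots$ up to $(i, j{+}k)$ are "controlled" then $(i,j)$ is a zero point of $T$, because after $k$ steps the $k$ paths leaving $(i,j)$ either land on a known zero point or pass through a vertex where one of the intermediate horizontal-edge products telescopes to $1+\zeta+\cdots+\zeta^{k-1}=0$. Propagating this from the top-right corner as in the proof of \Cref{lemma:matrix item}, I expect $M_{S,T}$ to be (up to permuting rows/columns and a unit scalar) lower- or upper-triangular with the surviving entries equal to Catalan numbers: after inclusion–exclusion, the paths avoiding all zero points are forced into a sublattice that is a plain staircase graph $G_{\rho_{n_0}}$ or $G_{\rho_{n_0+1}}$, whose non-intersecting single paths are counted by $C_{n_0-1}$ or $C_{n_0}$ respectively, with multiplicities $k-r$ and $r$ according to the residue. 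Taking absolute values of the determinant then yields $\Phi^k_w = (C_{n_0-1})^{k-r}(C_{n_0})^r$.

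Concretely, the steps in order are: (1) define $\Tilde{\rho}_n^{(k)}$, $\Tilde{G}^{(k)}$ and the $\zeta$-weighting; (2) reduce $\Phi^k_w$ to $|\det M_{S,T}|$ via BPDs, droops and \Cref{lemma:Lindstrom}, establishing the constant $\delta_k(n)$; (3) prove the $k$-step zero-point propagation lemma and determine $Z(T_i)$ for all $i$; (4) use inclusion–exclusion to reduce each surviving $\wt(S_i,T_j)$ to a count of lattice paths in a plain staircase, evaluating it as a Catalan number; (5) assemble the triangular-up-to-permutation matrix and read off the determinant. I would expect step (3)–(4) to be the main obstacle: in the general $k$ case the zero-point sets for different endpoints interleave in a more intricate way than in the $k=2$ picture of \Cref{fig:zero}, and one must check carefully that the inclusion–exclusion collapse leaves behind \emph{exactly} a staircase region of the right size (with the correct split between residue classes giving the exponents $k-r$ and $r$), rather than something with extra weighted edges that would spoil the clean Catalan evaluation. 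The bookkeeping of which of the $k$ columns $S_1,\ldots,S_k$ maps to which row $T_1,\ldots,T_k$ in the Rothe permutation, and hence the sign/permutation data needed to pin down $|\det M_{S,T}|$, is the other place where care is required, though it is routine once the zero-point structure is understood.
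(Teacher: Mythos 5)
Your proposal follows essentially the same route as the paper, which proves \Cref{thm:multi-Catalan} as the $m=1$ case of the factorization $F^k(km,kp+r)=F(m,p)^{k-r}F(m,p+1)^r$ and explicitly states that this is verified ``via the same way as doubly layered permutations'' — i.e.\ precisely your steps (1)–(5): the $\zeta^{j-1}$-weighted $k$-fold staircase graph, the droop argument, the Lindstr\"om–Gessel–Viennot determinant, and the zero-point analysis generalizing \Cref{prop}. The one refinement worth recording is that the $k$-step vanishing driving the zero-point propagation is the vanishing of the Gaussian binomial $\binom{k}{b}_{\zeta}$ for $0<b<k$ (of which $1+\zeta+\cdots+\zeta^{k-1}=0$ is the $b=1$ instance), which is exactly what replaces the two-path cancellation used in the proof of \Cref{lemma:matrix item}.
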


Next, similarly as above, define $w^k_0(n) := (n-k+1,\ldots,n,n-2k+1,\ldots,n-k,\ldots)$ and $F^k(km,kp+r) := \Phi^k_{1^{km}\times w^k_0(kp+r)}$, where $0 \le r < k$.
We always have $\Phi^k_{w^k_0(n)} = 1$ and 
\[\Phi^k_{w_k(kb_1,\ldots,kb_t+r)} = F^k(kb_1,kb_2) \cdot F^k(kb_1+kb_2,kb_3) \cdot \ \cdots \ \cdot F^k(kb_1+\cdots + kb_{k-1},kb_k+r).\]
In particular, when $k = 2$, $w^k_0(n)$ is $\Tilde{w}_0(n)$ and $F^k(2m,2p+r)$ is $\Tilde{F}(2m,2p+r)$.

Analogous to \Cref{thm:F} and \Cref{cor:doubly}, we have the following results for multi-layered permutations under principal specializations $\Phi_w^k$. 

\begin{theorem}
    Given $0 \le r < k$, we have
    \[F^k(km,kp+r) = \left(F(m,p)\right)^{k-r}\cdot \left(F(m,p+1)\right)^r.\]
    In particular, when $m = 1$, we recover the equation in \Cref{thm:multi-Catalan}.
\end{theorem}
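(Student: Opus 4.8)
The plan is to mimic the proof of \Cref{thm:F} while carrying along $k$ parallel copies of the combinatorics instead of $2$. Fix the $k$-th root of unity $\zeta$ and set up, exactly as in \Cref{sec:main part}, the generalized $k$-fold staircase partition obtained by deleting the fixed pipes $i > km$ of a BPD of $1^{km} \times w^k_0(kp+r)$, together with the weighted lattice graph $\Tilde{G}^k$ in which vertical edges have weight $1$ and a horizontal edge in row $j$ has weight $\zeta^{j-1}$. The first task is to check that this weight assignment on $\Tilde{G}^k$ matches the weight of empty tiles of a BPD under $\Phi^k_w$ up to a global sign/phase, which follows by the same simple-droop argument as in \Cref{lemma:weight-equ}: a simple droop multiplies a tile weight by $\zeta$ (up to the standard bookkeeping), and both the BPD and its associated $km$-path are obtained from the Rothe BPD by the same droop sequence. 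Hence $F^k(km,kp+r)$ equals, up to absolute value, $\det(\Tilde M)$ where $\Tilde M$ is the $km \times km$ matrix of weighted path sums from $S = \{S_1,\ldots,S_{km}\}$ to $T = \{T_1,\ldots,T_{km}\}$ via \Cref{lemma:Lindstrom}.

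Next I would compute the zero-point sets $Z(T_i)$ for each end point. The key structural fact is that $1 + \zeta + \cdots + \zeta^{k-1} = 0$, so a vertex from which the weighted sum of paths to $T_i$ vanishes arises whenever $k$ consecutive residues in the relevant direction appear with the appropriate geometric phases; this is the analogue of the ``half have weight $1$, half have weight $-1$'' computation in the proof of \Cref{lemma:matrix item}, and the analogue of \Cref{prop} (closure of $Z(T)$ under the local three-step recursion) goes through verbatim with $-1$ replaced by $\zeta$ and ``$i-2$'' replaced by ``$i-k$''. One finds, for $T_i$ with $i \equiv c \pmod k$, that $Z(T_i)$ consists of the vertices $(x,y)$ with $x > i$ lying in a fixed congruence class mod $k$ determined by $c$. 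Then, as in \Cref{thm:F}, one uses the inclusion–exclusion reduction $\wt(S_i,T_j) = \sum_{P : Z(T_j)\cap P = \emptyset} \wt(P)$ and observes that, after deleting the rows and columns meeting $Z(T_j)$, the surviving sublattice is a copy of $G_{\rho_{m+p}^m}$ (or of $G_{\rho_{m+p+1}^m}$, in the ``shifted'' residue classes coming from the remainder $r$), with the horizontal weights now a global constant phase rather than $1$. This identifies each $k \times k$ block of $\Tilde M$, indexed by $(i',j')$ with $1 \le i',j' \le m$, with an explicit upper/lower-triangular-in-blocks expression whose entries are $M_{i'j'}$ and $M'_{i'j'}$ (the entries of the matrices of $F(m,p)$ and $F(m,p+1)$) times roots of unity, with the $r$ ``extra'' rows/columns contributing copies of $M'$ and the remaining $k-r$ contributing copies of $M$.

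Finally I would finish by block row/column operations exactly parallel to the two cases of \Cref{thm:F}: add appropriate rows within each size-$k$ group, subtract the last row of each group where the $r=1$ case did, and then permute rows and columns to sort the residue classes, bringing $\Tilde M$ into block-triangular form $\mathrm{diag}$ of $r$ copies of $M'$ and $k-r$ copies of $M$ (each up to a unit scalar, which disappears under $|\det(\cdot)|$). Taking absolute values of the determinant gives $F^k(km,kp+r) = |\det M'|^{\,r}\,|\det M|^{\,k-r} = F(m,p+1)^r F(m,p)^{k-r}$, and the $m=1$ specialization recovers \Cref{thm:multi-Catalan} since $F(1,p) = C_p$. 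The main obstacle is the careful bookkeeping of the phases: with a general root of unity one must track not just a single sign but the precise power of $\zeta$ attached to each horizontal edge when a sublattice is extracted, and verify both that the ``banned'' rows/columns really do carry a constant phase (so the extracted block is a scalar multiple of $M$ or $M'$) and that the residue classes partition cleanly into $k-r$ ``type-$M$'' classes and $r$ ``type-$M'$'' classes; getting the boundary/remainder behaviour right — which classes see the larger staircase $\rho_{m+p+1}^m$ — is where the $r=1$ case of \Cref{thm:F} already required the delicate $M'_{m+1,j}=0$ convention, and the general-$k$ version will need the analogous conventions for all $r$ shifted classes at once.
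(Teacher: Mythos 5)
Your proposal follows essentially the same route as the paper, which itself only sketches this theorem by deferring to the doubly layered case: construct the $k$-fold staircase graph with horizontal weights $\zeta^{j-1}$, match BPD weights to path weights via simple droops, identify zero points using $1+\zeta+\cdots+\zeta^{k-1}=0$, and reduce the $km\times km$ path matrix by block row/column operations to $k-r$ copies of the matrix of $F(m,p)$ and $r$ copies of that of $F(m,p+1)$. Your outline is in fact more detailed than the paper's, and your flagged concerns about phase bookkeeping and the remainder classes are exactly the points the paper leaves implicit.
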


\begin{cor}
    For a $k$-multi-layered permutation $w_k(kb_1,\ldots,kb_t,r)$, we have
    \[\Phi^k_{w_k(kb_1,\ldots,kb_t+r)} = (\Upsilon_{w(b_1,\ldots,b_t)})^{k-r} \cdot (\Upsilon_{w(b_1,\ldots,b_t+1)})^r,\]
    where $w(b_1,\ldots,b_t)$ and $w(b_1,\ldots,b_t+1)$ are two layered permutations.
\end{cor}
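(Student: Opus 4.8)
The plan is to mimic the proof of \Cref{cor:doubly} verbatim, with the single invocation of \Cref{thm:F} replaced by its multi-layered analogue (the theorem immediately preceding this corollary). First I would use \Cref{equ:prop} applied to the $k$-multi-layer decomposition
\[w_k(kb_1,\ldots,kb_t+r) = w^k_0(kb_1)\times\cdots\times w^k_0(kb_{t-1})\times w^k_0(kb_t+r)\]
together with $\Phi^k_{w^k_0(n)}=1$ to get the multiplicative formula
\[\Phi^k_{w_k(kb_1,\ldots,kb_t+r)} = F^k(kb_1,kb_2)\cdot F^k(kb_1+kb_2,kb_3)\cdots F^k(kb_1+\cdots+kb_{t-1},kb_t+r),\]
which is exactly the displayed identity stated earlier in this section. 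The point to note is that the second argument of each of the first $t-1$ factors is a multiple of $k$, so only the last factor carries the remainder $r$.

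Next I would apply the preceding theorem $F^k(km,kp+r)=F(m,p)^{k-r}F(m,p+1)^r$ to every factor. For $i=2,\ldots,t-1$ we have remainder $0$, hence $F^k(k(b_1+\cdots+b_{i-1}),kb_i)=F(b_1+\cdots+b_{i-1},b_i)^k$, while the last factor gives $F(b_1+\cdots+b_{t-1},b_t)^{k-r}F(b_1+\cdots+b_{t-1},b_t+1)^r$. Writing the exponent $k$ of each of the first factors as $(k-r)+r$ and collecting the $(k-r)$-th powers and the $r$-th powers separately, I obtain
\[\Phi^k_{w_k(kb_1,\ldots,kb_t+r)} = \left(\prod_{i=2}^{t}F(b_1+\cdots+b_{i-1},b_i)\right)^{k-r}\left(\prod_{i=2}^{t-1}F(b_1+\cdots+b_{i-1},b_i)\cdot F(b_1+\cdots+b_{t-1},b_t+1)\right)^{r}.\]

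Finally I would identify the two bracketed products using the telescoping identity for layered permutations recalled at the start of \Cref{sec:main part}: $\Upsilon_{w(b_1,\ldots,b_t)}=\prod_{i=2}^{t}F(b_1+\cdots+b_{i-1},b_i)$. Since $w(b_1,\ldots,b_{t-1},b_t+1)$ has the same first $t-1$ layers as $w(b_1,\ldots,b_t)$, the same identity gives $\Upsilon_{w(b_1,\ldots,b_t+1)}=\prod_{i=2}^{t-1}F(b_1+\cdots+b_{i-1},b_i)\cdot F(b_1+\cdots+b_{t-1},b_t+1)$, and the claimed formula follows. There is no real obstacle here once the $F^k$-factorization theorem is available — all the substantive work (building $\Tilde{G}$/its $k$-fold analogue, the zero-point/inclusion–exclusion bookkeeping, and the elementary row operations on $\Tilde{M}$) lives in that theorem; the only point in the corollary that needs a moment's care is checking that the partial sums $b_1+\cdots+b_{i-1}$ indexing the $F$-factors genuinely coincide for the two layered permutations $w(b_1,\ldots,b_t)$ and $w(b_1,\ldots,b_t+1)$, which holds precisely because only the last layer is altered.
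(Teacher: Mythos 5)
Your proof is correct and is exactly the argument the paper intends: the paper only says this corollary "can be verified via the same way as doubly layered permutations," and your write-up is precisely the $k$-fold generalization of the proof of the $k=2$ corollary (decompose $\Phi^k_w$ into a product of $F^k$-factors via the $\times$-decomposition, apply the preceding factorization theorem to each factor, and regroup using the telescoping product $\Upsilon_{w(b_1,\ldots,b_t)}=\prod_{i=2}^{t}F(b_1+\cdots+b_{i-1},b_i)$). The bookkeeping with the exponents $(k-r)+r$ and the observation that only the last layer's $F$-factor differs between the two layered permutations are both handled correctly.
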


All the statements can be verified via the same way as doubly layered permutations. The method is to construct a graph $G$ corresponding to a certain staircase partition $\rho$ and to calculate the Schubert polynomial by transferring the weighted sum of bumpless pipe dreams in $\rho$ to the weighted sum of paths in $G$. Note that each horizontal edge on row $j$ has weight $\zeta^{j-1}$.
% The details are left as an exercise for interested readers. 
Therefore, we obtain the generalized version of our main result \Cref{thm:asymptotic}.

\begin{theorem}
There is a limit
\[\lim_{n \to \infty} \frac{\log_2 v_k(n)}{n^2} = \frac{1}{k}\lim_{n \to \infty} \frac{\log_2 v(n)}{n^2} = \frac{\gamma}{k \log 2} \approx \frac{0.2932}{k}.\]
\end{theorem}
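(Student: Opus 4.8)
The final statement is the asymptotic result for $k$-multi-layered permutations, asserting
\[\lim_{n \to \infty} \frac{\log_2 v_k(n)}{n^2} = \frac{1}{k}\lim_{n \to \infty} \frac{\log_2 v(n)}{n^2}.\]
The plan is to reduce this to the $k=1$ case (the Morales--Pak--Panova theorem) via the corollary immediately preceding it, exactly as \Cref{thm:asymptotic} was deduced from \Cref{cor:doubly} in the doubly layered case.

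First I would invoke the multi-layered analogue of \Cref{cor:doubly}: for any $k$-multi-layered permutation $w_k(kb_1,\ldots,kb_t+r)$ with $0 \le r < k$ we have
\[\Phi^k_{w_k(kb_1,\ldots,kb_t+r)} = \bigl(\Upsilon_{w(b_1,\ldots,b_t)}\bigr)^{k-r} \cdot \bigl(\Upsilon_{w(b_1,\ldots,b_t+1)}\bigr)^{r}.\]
Taking the maximum over all $k$-multi-layered permutations in $S_n$, and writing $n = kn_0 + r$, I would argue that $v_k(n)$ is sandwiched between $v(n_0)^k$-type quantities: on one hand, choosing the layered permutation on $\lfloor n/k\rfloor$ or so letters that maximizes $\Upsilon$ and padding appropriately shows $v_k(n) \ge v(n_0)^{k-r} v(n_0)^{r}$ up to lower-order adjustments; on the other hand, since $\Upsilon_{w(b_1,\ldots,b_t)} \le v(b_1+\cdots+b_t)$ and $b_1+\cdots+b_t = n_0$ while $b_1+\cdots+b_t+1 = n_0+1$, the product in the corollary is at most $v(n_0)^{k-r} v(n_0+1)^{r}$. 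Hence
\[v(n_0)^{k-r}\,v(n_0)^{r} \le v_k(n) \le v(n_0)^{k-r}\,v(n_0+1)^{r}\]
after absorbing boundary terms; more care is needed because the inner layered permutation $w(b_1,\ldots,b_t)$ sums to $n_0$ regardless of $r$, so actually the upper and lower bounds are $v(n_0)^{k-r} v(n_0+1)^r$ and $v(n_0)^k$ respectively, and these differ only by a factor $(v(n_0+1)/v(n_0))^r$ which is $2^{O(n_0)} = 2^{o(n^2)}$ since $\log_2 v(n_0) \sim c\, n_0^2$ forces $\log_2 v(n_0+1) - \log_2 v(n_0) = O(n_0)$.

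Then I would take logarithms, divide by $n^2 = (kn_0 + r)^2$, and pass to the limit. Using $\log_2 v_k(n) = k \log_2 v(n_0) + O(n_0)$ and $(kn_0+r)^2 = k^2 n_0^2 (1 + o(1))$, we get
\[\frac{\log_2 v_k(n)}{n^2} = \frac{k \log_2 v(n_0) + O(n_0)}{k^2 n_0^2 (1+o(1))} \longrightarrow \frac{1}{k}\cdot \lim_{n_0 \to \infty}\frac{\log_2 v(n_0)}{n_0^2} = \frac{\gamma}{k \log 2},\]
where the last equality is the Morales--Pak--Panova theorem quoted in the introduction. Finally I would note that the substitution $q = \zeta$ does not affect the combinatorial skeleton of the argument: as remarked in the excerpt, the only change is that horizontal edges on row $j$ carry weight $\zeta^{j-1}$ instead of $\pm 1$, the zero-point and inclusion--exclusion analysis goes through verbatim, and the resulting determinant factors as a product of $F(m,p)$-type determinants with unit-modulus scalars, so that $|\det|$ is unchanged in absolute value.

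The main obstacle is controlling the error terms carefully enough in the sandwich bound, i.e.\ verifying that $\log_2 v(n_0+1) - \log_2 v(n_0) = O(n_0)$ so that the $r$-dependent boundary contribution is genuinely $o(n^2)$; this follows from the asymptotic $\log_2 v(n_0) \sim \frac{\gamma}{\log 2} n_0^2$ together with monotonicity considerations ($v$ is nondecreasing since one can pad by a fixed point), but it must be spelled out to make the limit claim rigorous rather than merely heuristic. A secondary point requiring attention is confirming that every integer $n$ is realized by some $k$-multi-layered permutation and that the maximizing such permutation indeed has its inner layered part summing to $n_0 = \lfloor n/k \rfloor$, which is immediate from the definition but worth stating.
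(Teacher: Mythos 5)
Your proposal is correct and takes essentially the same route as the paper: the theorem is deduced from the factorization $\Phi^k_{w_k(kb_1,\ldots,kb_t+r)} = (\Upsilon_{w(b_1,\ldots,b_t)})^{k-r}(\Upsilon_{w(b_1,\ldots,b_t+1)})^{r}$ together with the Morales--Pak--Panova limit for $v(n)$, exactly mirroring how \Cref{thm:asymptotic} follows from \Cref{cor:doubly}. If anything, you are more careful than the paper, whose model computation for $k=2$ simply asserts $\Tilde{v}(n)=v(n_0)^2$ or $v(n_0)v(n_0+1)$ without the sandwich argument you give to control the mismatch between the maximizers of the two linked factors.
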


For the largest principal specialization at $q = 1$, Stanley gave the upper bound for $u(n)$ based on the Cauchy identity for Schubert polynomials. 
\begin{theorem}[\cite{stanley}]
    $\limsup_{n \to \infty} \frac{\log_2 u(n)}{n^2} \le \frac12$.
\end{theorem}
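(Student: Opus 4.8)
The plan is to recover Stanley's bound from the Cauchy identity for Schubert polynomials, which has the form
\[\sum_{w \in S_n} \mathfrak{S}_w(x_1, x_2, \ldots)\,\mathfrak{S}_{\widehat{w}}(y_1, y_2, \ldots) \;=\; \prod_{\substack{i, j \ge 1 \\ i + j \le n}} (x_i + y_j),\]
for a suitable bijection $w \mapsto \widehat{w}$ of $S_n$ (after, if needed, a harmless relabelling of the $y$-variables); see \cite{SchubertNotes, stanley}. Both sides are polynomials with nonnegative integer coefficients, so the identity may be specialized freely.

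First I would set every variable equal to $1$. The number of pairs $(i,j)$ with $i, j \ge 1$ and $i + j \le n$ is $(n-1) + (n-2) + \cdots + 1 = \binom{n}{2}$, so the right-hand side becomes $2^{\binom{n}{2}}$, while the summand indexed by $w$ on the left becomes $\Upsilon_w \cdot \Upsilon_{\widehat{w}}$. Since every Schubert polynomial is nonzero with nonnegative integer coefficients, $\Upsilon_v \ge 1$ for all $v \in S_n$, and all summands are nonnegative; hence for each $w \in S_n$,
\[\Upsilon_w \;\le\; \Upsilon_w\,\Upsilon_{\widehat{w}} \;\le\; \sum_{v \in S_n} \Upsilon_v\,\Upsilon_{\widehat{v}} \;=\; 2^{\binom{n}{2}},\]
so that $u(n) = \max_{w \in S_n}\Upsilon_w \le 2^{\binom{n}{2}}$.

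The remaining step is purely asymptotic: $\log_2 u(n) \le \binom{n}{2} = \tfrac12 n(n-1)$ gives $\tfrac{\log_2 u(n)}{n^2} \le \tfrac{n-1}{2n}$, and letting $n \to \infty$ yields $\limsup_{n \to \infty} \tfrac{\log_2 u(n)}{n^2} \le \tfrac12$.

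I do not anticipate any real obstacle. The only point needing care is the normalization of the Cauchy identity: one must be sure that setting all variables to $1$ turns its right-hand side into exactly $2^{\binom{n}{2}}$, equivalently that the product there has precisely $\binom{n}{2}$ linear factors. This is standard and is pinned down by a degree count — the identity is homogeneous of degree $\ell(w_0) = \binom{n}{2}$ — and the remaining ingredients (nonnegativity of Schubert coefficients, the bound $\Upsilon_v \ge 1$, and the elementary limit) are routine.
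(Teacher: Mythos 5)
Your argument is correct and is exactly the one the paper attributes to Stanley: specialize the Cauchy identity $\sum_{w}\mathfrak{S}_w(x)\mathfrak{S}_{\widehat{w}}(y)=\prod_{i+j\le n}(x_i+y_j)$ at all variables equal to $1$, use nonnegativity and $\Upsilon_{\widehat{w}}\ge 1$ to get $u(n)\le 2^{\binom{n}{2}}$, and take the limit. No issues.
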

Let $u_k(n)$ be the largest principal specialization at $k$-th unit root \[u_k(n) := \max_{w \in S_n} \Phi^k_w.\] It is easy to see that, given $k$, $\Phi_w^k \le \Upsilon_w$ always holds for each $w \in S_n$. Thus, \[u_k(n) \le u(n),\] for any $k \ge 1$. The upper bound for $u(n)$ is also an upper bound for $u_k(n)$. Moreover, $v_k(n)$ naturally forms a lower bound for $u_k(n)$. Therefore, we have
\begin{theorem}
    For any $k \ge 1$ fixed, 
    \[\frac{\gamma}{k\log 2} \le \liminf_{n \to \infty} \frac{\log_2 u_k(n)}{n^2} \le \limsup_{n \to \infty} \frac{\log_2 u_k(n)}{n^2} \le \frac12.\]
\end{theorem}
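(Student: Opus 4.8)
The plan is to assemble the bound from three ingredients that are already in place, so the proof is essentially a packaging argument. First I would recall that $v_k(n) = \max_{w \in k\mathcal{L}_n} \Phi^k_w \le \max_{w \in S_n} \Phi^k_w = u_k(n)$ trivially, since $k\mathcal{L}_n \subseteq S_n$; this gives the lower bound $\liminf_{n\to\infty} \frac{\log_2 u_k(n)}{n^2} \ge \lim_{n\to\infty} \frac{\log_2 v_k(n)}{n^2} = \frac{\gamma}{k\log 2}$, where the limit for $v_k$ is the generalized version of \Cref{thm:asymptotic} proved just above. For the upper bound, I would invoke the elementary inequality $\Phi^k_w = |\S_w(1,\zeta,\zeta^2,\ldots)| \le \S_w(1,1,1,\ldots) = \Upsilon_w$, which holds termwise: writing $\S_w = \sum_{D \in \BPD(w)} \wt(D)$ and specializing $x_i = \zeta^{i-1}$, each monomial has modulus $1$, so by the triangle inequality $|\S_w(1,\zeta,\ldots)| \le \#\BPD(w) = \Upsilon_w$. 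Taking maxima over $w \in S_n$ yields $u_k(n) \le u(n)$, hence $\limsup_{n\to\infty} \frac{\log_2 u_k(n)}{n^2} \le \limsup_{n\to\infty} \frac{\log_2 u(n)}{n^2} \le \frac12$ by Stanley's bound quoted immediately before the statement.

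Chaining these together gives
\[\frac{\gamma}{k\log 2} \le \liminf_{n\to\infty}\frac{\log_2 u_k(n)}{n^2} \le \limsup_{n\to\infty}\frac{\log_2 u_k(n)}{n^2} \le \frac12,\]
which is exactly the claim. The only point requiring a sentence of care is that $\frac{\log_2 v_k(n)}{n^2} \to \frac{\gamma}{k\log 2}$ is a genuine limit (not merely a liminf), so it legitimately lower-bounds the liminf of $\frac{\log_2 u_k(n)}{n^2}$; this is guaranteed by the generalized asymptotic theorem above, whose proof reduces $v_k(n)$ to a product of layered-permutation specializations $v(\cdot)$ via the multi-layered analogue of \Cref{cor:doubly}, and then applies the Morales--Pak--Panova limit.

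I do not anticipate a genuine obstacle here — everything reduces to monotonicity of the two quantities and the already-established asymptotics. If anything, the subtle spot is making sure the termwise modulus bound $|\wt(D)|_{x_i = \zeta^{i-1}} = 1$ is stated correctly: each empty tile in row $i$ contributes $\zeta^{i-1}$, a unit complex number, so any product of them is again a unit, and the number of empty tiles is the same across all BPDs of a fixed $w$ (it equals $\binom{n}{2} - \ell(w)$), which is why the triangle-inequality bound collapses cleanly to $\Upsilon_w$. With that observation recorded, the rest is a two-line chain of inequalities.
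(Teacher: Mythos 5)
Your proposal is correct and follows essentially the same route as the paper: the lower bound via $v_k(n)\le u_k(n)$ together with the generalized asymptotic theorem, and the upper bound via the termwise estimate $\Phi^k_w\le\Upsilon_w$ combined with Stanley's bound. The paper merely asserts $\Phi^k_w\le\Upsilon_w$ as ``easy to see,'' whereas you spell out the triangle-inequality argument on the BPD expansion, which is a harmless (and welcome) elaboration.
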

This theorem gives the first order of asymptotics for the largest principal specialization $u_k(n)$ at $q=\zeta$. A natural further question is how to narrow down the upper bound for $u_k(n)$. 
At the end of this paper, we make the following conjecture.
\begin{conj}
    Given $k \in \mathbb{Z}_{>0}$, for every $n$, all permutations reaching the maximum $u_k(n)$ under principal specialization $\S_w(1,q,q^2,\ldots)$ at $q=\zeta$ are $k$-multi-layered permutations. In other words, $u_k(n) = v_k(n)$. 
    Thus, there is a limit \[\lim_{n \to \infty} \frac{\log_2 u_k(n)}{n^2} = \frac{\gamma}{k\log 2}.\]
\end{conj}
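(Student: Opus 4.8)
\medskip
\noindent\textbf{A program towards this conjecture.} Since $v_k(n)$ is, by the results of \Cref{sec:multi}, realized by explicit $k$-multi-layered permutations and expressed there as products of principal specializations $\Upsilon$ of layered permutations, proving $u_k(n)=v_k(n)$ amounts to the sharp upper bound $\Phi^k_w\le v_k(n)$ for every $w\in S_n$, together with a rigidity statement forcing every maximizer to be $k$-multi-layered. One caveat is immediate: the case $k=1$ is \emph{exactly} \Cref{conj:Merzon} of Merzon and Smirnov (here $\zeta=1$, so $\Phi^1_w=\Upsilon_w$), which is open and already implies \Cref{conj:Stanley}. Hence the general statement is at least as hard as that case, and the outline below should be read either as a reduction to $k=1$, or as a refinement of whatever method eventually settles it.

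\medskip
\noindent\textbf{Step 1: multiplicative reduction.} Every $w\in S_n$ has a unique maximal decomposition $w=w^{(1)}\times\cdots\times w^{(s)}$ into plus-indecomposable blocks, and iterating \Cref{equ:prop} gives $\Phi^k_w=\prod_{i=1}^{s}\big|\mathfrak{S}_{1^{M_{i-1}}\times w^{(i)}}(1,\zeta,\zeta^2,\ldots)\big|$, where $M_{i-1}$ is the combined size of $w^{(1)},\ldots,w^{(i-1)}$; this specializes to $\Phi^k_{w_k(kb_1,\ldots)}=\prod F^k(\cdots)$ precisely when every $w^{(i)}$ is a single $k$-multi-layer. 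So $u_k(n)$ is the maximum of such products over all block decompositions, and the conjecture becomes the claim that this maximum is attained by taking each block to be a single $k$-multi-layer (of size a multiple of $k$, with at most one exceptional last block). Granting control of the individual factors, the combinatorial optimization of the block sizes should follow the scheme of \Cref{thm:F} and \Cref{thm:asymptotic}; the substantive input is to bound a single factor $\big|\mathfrak{S}_{1^{M}\times w}(1,\zeta,\zeta^2,\ldots)\big|$ for an arbitrary indecomposable $w$ against the corresponding $F^k$-quantity for a $k$-multi-layer of the same size.

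\medskip
\noindent\textbf{Step 2: bounding one block via BPDs and LGV.} The plan is to run the machinery of \Cref{sec:main part} in full generality: place the Rothe diagram of $1^{M}\times w$ on a lattice, weight each horizontal edge in row $j$ by $\zeta^{j-1}$ as in \Cref{def:double stair graph}, express the factor via \Cref{thm:BPD wt} and \Cref{lemma:Lindstrom} as the absolute value of an LGV determinant, and then use zero-point propagation (\Cref{prop}) to pass to a reduced, \emph{unsigned} sub-lattice on which the determinant is a genuine nonnegative count of non-intersecting path families. For a $k$-multi-layer the zero points tile the diagram so cleanly that the signed matrix becomes block-triangular with unsigned diagonal blocks --- this is exactly the mechanism behind \Cref{thm:F} --- and the content of Step 2 is to show that for an arbitrary indecomposable block the surviving positive count is never larger. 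Natural attempts are a weight-preserving injection from the surviving path families into those of a suitable $k$-multi-layer diagram, or a Hadamard-type estimate on the reduced determinant.

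\medskip
\noindent\textbf{The main obstacle.} The difficulty is structural, not merely technical. Since $\Phi^k_w$ is an absolute value of an alternating sum, any good upper bound must \emph{exploit} the cancellation, yet the cancellation is unpredictable precisely for permutations that are not close to $k$-multi-layered; the trivial bound $\Phi^k_w\le\Upsilon_w$ is of no help here, because $\Upsilon_w$ can be as large as $u(n)\ge v(n)$, which for $k>1$ exceeds $v_k(n)$ by a factor exponential in $n^2$, and an a-priori-possible non-layered near-maximizer of $\Upsilon$ could in principle retain a large $\Phi^k_w$ through incomplete cancellation. Ruling this out is, for $k=1$, literally the Merzon--Smirnov conjecture. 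Pending that, the realistic partial deliverables are: an unconditional check of $u_k(n)=v_k(n)$ for small $n$ by direct computation of $\Phi^k_w$ over $S_n$, mirroring the evidence for \Cref{conj:Merzon}; and a conditional statement that any resolution of $u(n)=v(n)$ which simultaneously identifies the indecomposable maximizers would, via Steps 1--2 and the root-of-unity cancellation already harnessed for multi-layered permutations, upgrade to $u_k(n)=v_k(n)$ for every $k$.
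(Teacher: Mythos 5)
This statement is an open conjecture in the paper; no proof is given there, and your submission is, as you yourself say, a program rather than a proof, so there is nothing to compare it against except for internal soundness. On that score your assessment is accurate and the key observation is the right one: for $k=1$ we have $\zeta=1$, $\Phi^1_w=\Upsilon_w$, and $1$-multi-layered permutations are layered permutations, so the conjecture literally contains \Cref{conj:Merzon} (and hence \Cref{conj:Stanley}) as a special case and cannot be easier than it. Your Step 1 is a correct use of \Cref{equ:prop} (iterating $\mathfrak{S}_{u\times v}=\mathfrak{S}_u\cdot\mathfrak{S}_{1^m\times v}$ over the plus-indecomposable block decomposition), and Step 2 is exactly the paper's machinery (\Cref{thm:BPD wt}, \Cref{lemma:Lindstrom}, zero-point propagation as in \Cref{prop}), which in the paper is only carried out for multi-layered $w$, where the zero points make the LGV matrix block-triangular. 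Your diagnosis of the obstacle is also correct: the bound $\Phi^k_w\le\Upsilon_w$ is far too weak since $v(n)/v_k(n)$ grows like $2^{cn^2}$, and controlling the cancellation in $|\mathfrak{S}_{1^M\times w}(1,\zeta,\zeta^2,\ldots)|$ for arbitrary indecomposable $w$ is the genuinely missing ingredient. One small addition worth making explicit: the final sentence of the conjecture (existence and value of the limit) would follow from $u_k(n)=v_k(n)$ combined with the paper's theorem that $\lim_{n\to\infty}\log_2 v_k(n)/n^2=\gamma/(k\log 2)$, so the whole content is the identity $u_k(n)=v_k(n)$, which remains open for every $k\ge 1$.
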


\section*{Acknowledgements}
The author thanks Prof. Yibo Gao for proposing this problem. 

\bibliographystyle{plain}
\bibliography{ref.bib}
\end{document}